\newcommand{\ds}{\displaystyle}
\newcommand{\R}{\mathbb{R}}
\newcommand{\N}{\mathbb{N}}
\newtheorem{theorem}{Theorem}
\newtheorem{lemma}[theorem]{Lemma}
\newtheorem{definition}{Definition}
\newtheorem{corollary}[theorem]{Corollary}
\newtheorem{remark}{Remark}
\newtheorem{example}{Example}
\begin{document}
%%%%%%%%%%%%%%%%%%%%%%%%%%%%%%%%%%%%%%%%%%%%%%%%%%%%%%%%%%%%%
\title{Alternating conditional gradient method for  \\convex feasibility problems  \thanks{The authors was supported in part by  CNPq grants 305158/2014-7 and 302473/2017-3,  FAPEG/PRONEM- 201710267000532 and CAPES.}}
\author{
R. D\'iaz Mill\'an \thanks{School of Information Technology, Deakin University, Melbourne,   Australia,  E-mail: {\tt  rdiazmillan@gmail.com}.}
\and
O.  P. Ferreira  \thanks{Instituto de Matem\'atica e Estat\'istica, Universidade Federal de Goi\'as,  CEP 74001-970 - Goi\^ania, GO, Brazil, E-mail:{\tt  orizon@ufg.br}.}
\and
 L. F. Prudente \thanks{Instituto de Matem\'atica e Estat\'istica, Universidade Federal de Goi\'as,  CEP 74001-970 - Goi\^ania, GO, Brazil, E-mail:{\tt  lfprudente@ufg.br}.}
 }

\maketitle

\begin{abstract}
The classical convex feasibility problem in a  finite dimensional Euclidean space is studied in the present paper.  We are interested in two cases. First,  we assume to  know how to compute an exact project onto one of the sets involved and the other set is  compact such that the conditional gradient (CondG) method  can be used for computing efficiently an inexact projection on it. Second,  we assume that both sets involved are compact   such that   the CondG method  can be used  for  computing efficiently   inexact projections on them.  We  combine   alternating projection method with  CondG method   to design a new method,  which  can be seen as an inexact feasible  version  of  alternate projection method.  The proposed method generates two different sequences belonging to each involved set, which converge to a point  in the intersection of them  whenever  it is not empty. If the intersection is empty, then the sequences converge to points in the respective sets whose distance  is equal to the distance between the sets in consideration.
\end{abstract}

{\bf Keywords:}  Convex feasibility  problem, alternating projection method,  conditional gradient method,   inexact projections.

{\bf AMS:} 65K05,   	90C30,  	90C25.

\section{Introduction}
The  {\it classic convex feasibility problem} consists  of finding a point in the intersection of two sets. It is formally state as follows:
\begin{equation}\label{def:InexactProjProb}
\mbox{find } x_*\in A\cap B, 
\end{equation}
where $A, B \subset \mathbb{R}^n$ are convex, closed,  and nonempty sets.  Although we are not concerned with practical issues at this time, we emphasize that several  practical applications appear  modeled  as Problem~\eqref{def:InexactProjProb}; see for example  \cite{Combettes1996, Combettes1999, HesseLukeNeumann2014} and  references therein.   Among the methods to solve Problem~\eqref{def:InexactProjProb}, the  {\it alternating projection method} is one of the most interesting and popular, with a long history dating back to J. von Neumann \cite{Neumann1950}.  Since this seminal work, the  alternating  projection method has attracted the attention of the scientific community working on optimization,  papers  dealing with this method include \cite{BauschkeBorwein1993, Bregman1965, CheneyGoldstein1959}.  Perhaps one of the factors that explains this interest is its simplicity and ease of implementation, making application to large-scale problems very attractive.   Several variants of this method have arisen and several theoretical and practical issues related to it have been discovered over the years, resulting in a wide literature on the subject.  For a historical perspective of this  method; see, for exemple  \cite{BauschkeBorwei1996} and  a complete annotated bibliography of books and review can be found in  \cite{CensoCegielski2015}.  

The aim of this paper  is   present  a new method to solve  Problem~\ref{def:InexactProjProb}. The proposed method is based on  the  alternating  projection method. For design the method, the {\it conditional gradient method  (CondG  method)} also known as {\it  Frank-Wolfe  algorithm} developed by Frank and Wolfe in 1956  \cite{FrankWolfe1956} (see also   \cite{LevitinPoljak1966}) is used to compute feasible inexact projections on the sets in consideration, which will be  named as {\it alternating conditional gradient  (ACondG) method}. We present  two versions of the method.  First,  we  assume that  we  know how to compute an exact project onto one of the sets involved. Besides,  we assume that the other  set   is compact with a special structure such that  CondG method  can be used  for  computing efficiently  feasible inexact projections on it.  Second,  we assume that both sets  are compact  with    special structures  such that   the CondG method  can be used  for  computing  feasible inexact projections on them. The ACondG method  proposed,   generates two sequences $(x^k)_{k\in\mathbb{N}}\subset A$ and $(y^k)_{k\in\mathbb{N}} \subset B$.  The  mains  obtained results are as follows. If  $A\cap B\neq \varnothing$, then the sequences   $(x^k)_{k\in\mathbb{N}}$ and  $(y^k)_{k\in\mathbb{N}}$  converge to a point  $x^*$ belonging  to   $A\cap B$.   If  $A\cap B= \varnothing$, then the sequences  $(x^k)_{k\in\mathbb{N}}$ and  $(y^k)_{k\in\mathbb{N}}$, converge respectively to $x^*\in A$ and $y^*\in B$ satisfying  $\|x^*-y^*\|=dist(A,B)$, where $dist(A,B)$ denotes  the distance between the sets $A$ and $B$. 

From a practical point of view, considering  methods that use inexact projections are particularly interesting. Indeed,   one drawback of  methods that use exact projections is the need  to solve a quadratic problem at each stage,  which  may substantially increasing the cost per iteration  if the number of unknowns is large. In fact, it may not be justified to carry out  exact projections when the iterates   are  {\it far} from the solution of the problem. For this reason, seeking  to make the alternating  projection method  more efficient,  we  use the CondG method to compute    feasible inexact projections  rather than exact ones.  It is noteworthy that the CondG method is easy to implement, has low computational cost per iteration, and readily exploits separability and sparsity, resulting in high computational performance in different classes  of compact sets, see \cite{Dunn1980, FreundGrigas2016, GarberHazan2015, Jaggi2013,  LevitinPoljak1966}. Therefore, we believe that all of these features accredit the method as being quite appropriate for our purpose, which has also been used for similar aims  \cite{Clarkson2010,Fukushima1984,MaxJefferson2017, LussTeboulle2013}.   As aforementioned, the proposed  method performs  iterations  alternately on the sets $A$ and $B$ only approximately, but them  become increasingly accurate  in relation to the progress of previous iterations. Therefore, the  resulting method can be seen as an inexact  version of the classical alternate projection method.  It is worth noting that others  approximate projections have been widely used in the literature. For instance, approximate projection can be  performed by projecting onto  the hyperplane separating the set and the  point to be projected, see \cite{Fukushima1986},  and for more examples, see \cite{BauschkeCombettes2001, CegielskiReichZalas2018, Combettes2001, DiazMillan2019,    DrusvyatskiyLewis2019, Iusem2019}.  However,   inexact projections obtained in this way are infeasible to the set to be projected, in contrast with   feasible inexact  projections  propose here. Moreover, as far as we know, the combination of the  conditional method  with the   alternate directions method  for  designing a new method to solve Problem~\eqref{def:InexactProjProb}  has not yet been considered.

The organization of the paper is as follows. In section \ref{sec:Preliminares}, we present some notation and basic results used in our presentation. In section \ref{Sec:CondG} we describe the conditional gradient  method and present some results related to it.   In  sections~\ref{Sec:ACondG-1} and  we present, respectively,  the first and second version of inexact  alternating  projection method  to solve Problem~\eqref{def:InexactProjProb}.   Some numerical experiments are provided in section \ref{Sec:NumExp}.   We conclude the paper with some remarks in section \ref{Sec:Conclusions}.

\bigskip

\noindent {\bf Notation.} We denote: ${\mathbb{N}}=\{0,1,2,\ldots\}$, $\langle \cdot,\cdot \rangle$ is the usual inner product, $\|\cdot\|$ is the Euclidean norm, and $[v]_i$ is the $i$-th component of the vector $v$.

%%%%%%%%%%%%%%%%%%%%%%%%%%%%%%%%%%%%%%%%%%%%%%%%%%%%%%%%%%
\section{{Preliminaries}} \label{sec:Preliminares}
%%%%%%%%%%%%%%%%%%%%%%%%%%%%%%%%%%%%%%%%%%%%%%%%%%%%%%%%%%
In this section, we present  some preliminary  results used throughout the paper.  The {\it projection  onto a  closed  convex set $C\subset  \mathbb{R}^n $} is the mapping   $P_C:   \mathbb{R}^n \to C$ defined      by
$$
P_C(v):=  \arg\min_{z \in  C}\|v-z\|.
$$
In the  next lemma we present some important properties of the  projection mapping. 
\begin{lemma}\label{le:projeccion}
Let  $C\subset \mathbb{R}^n$ be any nonempty closed and convex set and $P_C$ the projection mapping onto $C$. For all $v\in  \mathbb{R}^n$, the following properties hold:
\begin{enumerate}
\item[(i)]$\langle v-P_C(v), z-P_C(v)\rangle\leq0$, for all $z\in C$;
\item[(ii)] $\|{P}_C(v)-z\|^2\leq \|v-z\|^2- \|{P}_C(v)-v\|^2$, for all $z\in C$;  
\item[(iii)]  the projection mapping $P_C$ is continuous.
\end{enumerate}
\end{lemma}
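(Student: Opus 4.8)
The plan is to establish the three properties in sequence, deducing (ii) and (iii) from the variational characterization in (i). For part (i) I would exploit that $P_C(v)$ is, by definition, the minimizer of $z \mapsto \|v-z\|^2$ over the convex set $C$. Fixing an arbitrary $z \in C$, convexity of $C$ ensures that $P_C(v) + t\bigl(z - P_C(v)\bigr) \in C$ for every $t \in [0,1]$. Setting $\varphi(t) := \|v - P_C(v) - t(z - P_C(v))\|^2$, the minimality of $P_C(v)$ forces $\varphi$ to attain its minimum over $[0,1]$ at $t=0$, whence $\varphi'(0) \geq 0$. A direct computation gives $\varphi'(0) = -2\langle v - P_C(v),\, z - P_C(v)\rangle$, and the desired inequality follows.

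For part (ii) I would argue purely algebraically from (i). Expanding the square via the decomposition $\|v-z\|^2 = \|(v - P_C(v)) + (P_C(v) - z)\|^2$ produces the two squared norms together with a cross term $2\langle v - P_C(v),\, P_C(v) - z\rangle$, which part (i) guarantees to be nonnegative. Discarding this nonnegative term and rearranging yields the claimed estimate.

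For part (iii) I would first upgrade (i) to the nonexpansiveness of $P_C$. Writing $p = P_C(u)$ and $q = P_C(v)$, I would apply (i) twice---once at $u$ with test point $q \in C$ and once at $v$ with test point $p \in C$---and add the resulting inequalities. After collecting terms this gives $\|p-q\|^2 \leq \langle u - v,\, p - q\rangle$, and Cauchy--Schwarz then yields $\|P_C(u) - P_C(v)\| \leq \|u - v\|$. Continuity is immediate from this global Lipschitz bound.

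None of the three statements poses a genuine obstacle, as all are classical. The only point requiring care is the construction of the admissible variation in (i): it is the convexity of $C$ that guarantees the perturbed point $P_C(v) + t(z - P_C(v))$ stays feasible, which is precisely what legitimizes the one-sided optimality condition $\varphi'(0) \geq 0$.
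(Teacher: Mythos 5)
Your proof is correct in all three parts: the one-sided optimality argument for (i), the expansion-and-sign argument for (ii), and the monotonicity plus Cauchy--Schwarz argument for (iii) all check out. The route is genuinely different from the paper's, though, because the paper does not prove (i) and (iii) at all: it simply cites \cite[Proposition 3.10, Theorem 3.14]{BauschkeCombettes20011} for those two items and only supplies an argument for (ii), namely combining the expansion $\|v-z\|^2=\|P_C(v)-v\|^2+\|P_C(v)-z\|^2-2\langle P_C(v)-v,\,P_C(v)-z\rangle$ with (i) --- which is exactly the computation you give for (ii). Your treatment of (i), perturbing along the feasible segment $P_C(v)+t(z-P_C(v))$ and using $\varphi'(0)\ge 0$, and your treatment of (iii), adding two instances of (i) to obtain $\|P_C(u)-P_C(v)\|^2\le\langle u-v,\,P_C(u)-P_C(v)\rangle$ and hence the $1$-Lipschitz bound, reprove the cited facts from first principles. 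What the paper's approach buys is brevity, deferring standard Hilbert-space facts to the textbook; what yours buys is a fully self-contained proof and, for (iii), a quantitatively stronger conclusion (firm nonexpansiveness and global Lipschitz continuity with constant $1$) than the bare continuity the paper invokes.
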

\begin{proof}
The items (i) and (iii)  are proved in  \cite[Proposition 3.10, Theorem 3.14]{BauschkeCombettes20011}. For item (ii),  combine  $ \|v-z\|^2=\|{P}_C(v)-v\|^2+\|{P}_C(v)-z\|^2-2\langle {P}_C(v)-v, {P}_C(v)-z\rangle$ with  item (i). 
\end{proof}
Let  $C,D \subset \mathbb{R}^n$ be  convex, closed,  and nonempty sets.  Define the {\it distance    between the sets $C$ and $D$}   by $dist(C,D):=\inf\{ \|v-w\|: ~v\in C, w\in D\}$. 
\begin{lemma} \label{le:ppseq}
Let   $C$ be a  compact and convex set and $D$ be  a  closed  and  convex set.  Assume that the sequences    $(v^k)_{k\in\mathbb{N}}\subset  C$ and $(w^k)_{k\in\mathbb{N}}\subset D$   satisfy  the following two conditions:
\begin{itemize}
\item[(c1)] $\lim_{k\to +\infty}\|v^{k+1}-v^k\|=0$ and  $ \lim_{k\to +\infty}\|w^{k+1}-w^k\|=0$;
\item[(c2)] $\lim_{k\to +\infty} \|w^{k+1}-P_D(v^k)\|=0$  and  $\lim_{k\to +\infty} \|v^{k+1}-P_C(w^{k+1})\|=0$.
\end{itemize}
Then,  each cluster point ${\bar v}$  of $(v^k)_{k\in\mathbb{N}}$ is a fixed point of $P_CP_D$, i.e.,  ${\bar v}=P_CP_D({\bar v})$. Moreover, $\lim_{k\to \infty}\|v^k-w^k\|=dist(C,D)$ and  $\lim_{k\to \infty}(v^k-w^k)=P_{C-D}(0)$, where $P_{C-D}(0):=\arg\min_{v \in  C, w\in D}\|v-w\|$.
\end{lemma}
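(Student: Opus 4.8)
The plan is to first establish the fixed-point conclusion and then extract the limit of the difference sequence from a variational characterization of such fixed points. Since $C$ is compact, the sequence $(v^k)_{k\in\mathbb{N}}$ admits cluster points; let $\bar v$ be one of them, say $v^{k_j}\to\bar v\in C$ along a subsequence. From the first part of (c2) and the continuity of $P_D$ (Lemma~\ref{le:projeccion}(iii)) I obtain $w^{k_j+1}\to P_D(\bar v)$, and then from the second part of (c2) together with the continuity of $P_C$ I obtain $v^{k_j+1}\to P_C(P_D(\bar v))$. On the other hand, (c1) gives $\|v^{k_j+1}-v^{k_j}\|\to 0$, so $v^{k_j+1}\to\bar v$ as well. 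Comparing the two limits yields $\bar v=P_C(P_D(\bar v))$, which is the first assertion.

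For the remaining statements I would isolate the following key fact: if $\bar v=P_C(\bar w)$ and $\bar w=P_D(\bar v)$, then $\bar v-\bar w=P_{C-D}(0)$. Since $C$ is compact and $D$ is closed, the set $C-D$ is closed and convex, so $P_{C-D}(0)$ is well defined and $\|P_{C-D}(0)\|=dist(C,D)$. To verify the characterization of $P_{C-D}(0)$ coming from Lemma~\ref{le:projeccion}(i), set $q=\bar v-\bar w$ and take an arbitrary $z=v-w\in C-D$. Writing $z-q=(v-\bar v)-(w-\bar w)$ and applying Lemma~\ref{le:projeccion}(i) to $\bar v=P_C(\bar w)$ and to $\bar w=P_D(\bar v)$ gives, respectively, $\langle \bar v-\bar w,\, v-\bar v\rangle\ge 0$ and $\langle \bar v-\bar w,\, w-\bar w\rangle\le 0$; combining these shows $\langle 0-q,\, z-q\rangle\le 0$ for every $z\in C-D$, i.e. $q=P_{C-D}(0)$.

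Finally, I would prove that $(v^k-w^k)\to P_{C-D}(0)$ by showing that every cluster point of this difference equals $P_{C-D}(0)$ and then invoking boundedness. The sequence $(v^k)_{k\in\mathbb{N}}$ is bounded since $C$ is compact, and $(w^k)_{k\in\mathbb{N}}$ is bounded because $P_D$ carries the compact set $C$ into a bounded set and $\|w^{k+1}-P_D(v^k)\|\to 0$; hence $(v^k-w^k)_{k\in\mathbb{N}}$ is bounded. Let $q$ be a cluster point, so $v^{k_j}-w^{k_j}\to q$; refining the subsequence I may assume $v^{k_j}\to\bar v$, whence $w^{k_j}\to\bar v-q=:\bar w$. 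Using (c1) to pass from index $k_j$ to $k_j-1$ (so that the two relations in (c2) apply at matching indices) together with the continuity of $P_D$ and $P_C$, I would identify $\bar w=P_D(\bar v)$ and $\bar v=P_C(\bar w)$; the key fact then forces $q=\bar v-\bar w=P_{C-D}(0)$. As every cluster point of the bounded sequence $(v^k-w^k)_{k\in\mathbb{N}}$ equals $P_{C-D}(0)$, the sequence converges to it, and taking norms gives $\lim_{k\to\infty}\|v^k-w^k\|=\|P_{C-D}(0)\|=dist(C,D)$.

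The main obstacle I anticipate is the middle step: recognizing that the displacement $\bar v-\bar w$ attached to any fixed point of $P_CP_D$ is always the same vector $P_{C-D}(0)$, and proving it through the two projection inequalities. This is precisely what makes the difference sequence converge even though $(v^k)_{k\in\mathbb{N}}$ itself need not: the fixed-point set of $P_CP_D$ may be large, but the associated displacement is invariant. A secondary technical point is the bookkeeping of indices in (c2), which couples $w^{k+1}$ with $v^k$ and $v^{k+1}$ with $w^{k+1}$; aligning these at a common index (via (c1), or equivalently via the uniform continuity of $P_D$ on the compact set $C$) is needed before the continuity arguments can be applied cleanly.
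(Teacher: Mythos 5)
Your proof is correct, and for the second half of the lemma it takes a genuinely different, more self-contained route than the paper. For the fixed-point claim both arguments are essentially the same subsequence extraction, though your direct use of the continuity of $P_D$ and $P_C$ (Lemma~\ref{le:projeccion}(iii)) is cleaner than the paper's appeal to Lemma~\ref{le:projeccion}(ii). The real divergence is afterwards: the paper first cites \cite[Theorem 2]{CheneyGoldstein1959} to conclude that any fixed point of $P_CP_D$ satisfies $\|\bar v - P_D(\bar v)\| = dist(C,D)$, deduces $\lim_k\|v^k-w^k\|=dist(C,D)$, and then invokes \cite[Lemma~2.3]{BauschkeBorwein1994} to upgrade norm convergence to convergence of the vectors $v^k-w^k$ to $P_{C-D}(0)$. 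You instead prove the needed ingredient from scratch: your ``key fact'' that any fixed pair $\bar v = P_C(\bar w)$, $\bar w = P_D(\bar v)$ has displacement $\bar v - \bar w = P_{C-D}(0)$, obtained by adding the two variational inequalities $\langle \bar v-\bar w,\, v-\bar v\rangle\ge 0$ ($v\in C$) and $\langle \bar v-\bar w,\, w-\bar w\rangle\le 0$ ($w\in D$), after which a cluster-point argument on the bounded sequence $(v^k-w^k)_{k\in\mathbb{N}}$ gives vector convergence first and the distance limit as a corollary --- the reverse order of the paper. What the paper's route buys is brevity via two external results; what yours buys is a self-contained proof that also explains \emph{why} the difference sequence converges although $(v^k)_{k\in\mathbb{N}}$ itself need not: the displacement attached to the (possibly large) fixed-point set is a single vector. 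One small point to make explicit: your key fact uses the \emph{converse} of Lemma~\ref{le:projeccion}(i), namely that $q\in C-D$ together with $\langle 0-q,\,z-q\rangle\le 0$ for all $z\in C-D$ forces $q=P_{C-D}(0)$; this is not literally stated in the lemma, but it follows in one line from the identity $\|0-z\|^2=\|0-q\|^2+\|q-z\|^2-2\langle 0-q,\,z-q\rangle\ge\|0-q\|^2$, so it is not a gap, merely a step worth writing out. Your index bookkeeping (shifting from $k_j$ to $k_j-1$ via (c1) so that the two relations in (c2) align) is handled correctly.
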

\begin{proof}
Let ${\bar v}$ be a cluster point of $(v^k)_{k\in\mathbb{N}}$.  Let  $(v^{k_j})_{j\in\mathbb{N}}$ be  a subsequence of $(v^k)_{k\in\mathbb{N}}$ with $\lim_{{j} \to +\infty}v^{k_j}={\bar v}$ and consider $(w^{k_j+1})_{j\in\mathbb{N}}$ which is a subsequence of  $(w^{k+1})_{k\in\mathbb{N}}$.  Since $C$ is   compact, $(v^k)_{k\in\mathbb{N}}\subset C$ and $\lim_{k\to +\infty} \|w^{k+1}-P_D(v^k)\|=0$, we conclude that  $(w^{k_j+1})_{j\in\mathbb{N}}$  is bounded. Thus, there exists a cluster point ${\bar w}$ of $(w^{k_j+1})_{j\in\mathbb{N}}$ and a subsequence $(w^{k_\ell+1})_{\ell\in\mathbb{N}}$  of $(w^{k_j+1})_{j\in\mathbb{N}}$ with $\lim_{{\ell} \to +\infty}w^{k_\ell+1}={\bar w}$. Futhermore, the corresponding subsequence $(v^{k_\ell})_{j\in\mathbb{N}}$ of $(v^{k_j})_{j\in\mathbb{N}}$   also satisfies   $\lim_{{\ell} \to +\infty}v^{k_\ell}={\bar v}$. Due to $(\|v^{k+1}-v^k\|^2)_{k\in\mathbb{N}}$ and $(\|w^{k+1}-w^k\|^2)_{k\in\mathbb{N}}$ converge  to zero,  we also have  $\lim_{{\ell} \to +\infty}v^{k_\ell+1}={\bar v}$ and $\lim_{{\ell} \to +\infty}w^{k_\ell}={\bar w}$. Moreover, it follows from Lemma~\ref{le:projeccion} (ii) and conditions  (c1) and (c2)  that
$$
{\bar w}=P_D({\bar v}), \qquad  {\bar v}=P_C({\bar w}), 
$$
respectively.  Hence, last equalities  imply  ${\bar v}=P_CP_D({\bar v})$ and,  by using  \cite[Theorem 2]{CheneyGoldstein1959},    we have  $\|{\bar v}-P_D({\bar v})\|=dist(C,D)$.  Therefore, we conclude that  each cluster point ${\bar v}$  of $(v^k)_{k\in\mathbb{N}}$ is a fixed point of $P_CP_D$, i.e.,  ${\bar v}=P_CP_D({\bar v})$, which proves the first statement.  Moreover, for each  subsequence $(v^{k_\ell})_{\ell\in\mathbb{N}}$ of  $(v^k)_{k\in\mathbb{N}}$ such that  $\lim_{{\ell} \to +\infty}v^{k_\ell}={\bar v}$, there exists a subsequence $(w^{k_\ell})_{\ell\in\mathbb{N}}$  of $(w^k)_{k\in\mathbb{N}}$ with $\lim_{{\ell} \to +\infty}w^{k_\ell}=P_D({\bar v})$ and $\|{\bar v}-P_D({\bar v})\|=dist(C,D)$.  Consequently, the sequence  $(\|v^k-w^k\|)_{k\in\mathbb{N}}$  converges to the distance between $A$ and $B$, i.e., $\lim_{k\to \infty}\|v^k-w^k\|=dist(C,D)$  and this proves   the second statement. Considering that  $D$ is a closed  convex set and $C$ is a compact convex set, it follows  that   $C-D$ is also a closed and convex  set, which implies   $P_{C-D}(0)$ is a singleton. Therefore, we obtain that   $(v^k-w^k)_{k\in\mathbb{N}}$ converges to $P_{A-B}(0)$, i.e., $\lim_{k\to \infty}(v^k-w^k)=P_{C-D}(0)$ (see,  \cite[Lemma~2.3]{BauschkeBorwein1994}), concluding the proof of the lemma. 
\end{proof}
\begin{definition}
Let $S$ be a nonempty subset of $\mathbb{R}^n$. A sequence $(v^k)_{k\in\mathbb{N}}\subset \mathbb{R}^n$ is said to be quasi-Fej\'er convergent to $S$, if and only if, for all $v\in S$ there exists $k_0\ge 0$ and a summable sequence $(\epsilon_k)_{k\in\mathbb{N}}$, such that $\|v^{k+1}-v\|^2 \le \|v^k - v\|^2+\epsilon_k$ for all $k\ge k_0$. If for all $k\in\mathbb{N}$, $\epsilon_k=0$, the sequence $(v^k)_{k\in\mathbb{N}}$ is said to be Fej\'er convergent to $S$.
\end{definition}
\begin{lemma}\label{fejer}
Let  $(v^k)_{k\in\mathbb{N}}$  be   quasi-Fej\'er convergent to $S$.  Then, the following conditions hold:
\begin{itemize}
\item[(i)] the sequence $(v^k)_{k\in\mathbb{N}}$ is bounded;
\item[(ii)] for all  $v\in S$, the sequence $(\|v^k-v\|)_{k\in\mathbb{N}}$ is convergent.
\item[(iii)] if a cluster point ${\bar v}$ of $(v^k)_{k\in\mathbb{N}}$  belongs to $S$, then  $(v^k)_{k\in\mathbb{N}}$ converges to ${\bar v}$.
\end{itemize}
\end{lemma}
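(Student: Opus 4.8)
The plan is to fix an arbitrary $v\in S$ and exploit the defining inequality $\|v^{k+1}-v\|^2\le \|v^k-v\|^2+\epsilon_k$, valid for all $k\ge k_0$, together with the summability of $(\epsilon_k)_{k\in\mathbb{N}}$. Writing $a_k:=\|v^k-v\|^2$, the recursion simply reads $a_{k+1}\le a_k+\epsilon_k$, and each of the three items will follow from a different elementary manipulation of this one-step estimate.

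For item (i), I would iterate the inequality from $k_0$ up to $k-1$, obtaining $a_k\le a_{k_0}+\sum_{j=k_0}^{k-1}\epsilon_j\le a_{k_0}+\sum_{j=0}^{\infty}\epsilon_j$. Since $(\epsilon_k)_{k\in\mathbb{N}}$ is summable, the right-hand side is a finite constant independent of $k$, so $(a_k)_{k\in\mathbb{N}}$, and hence $(v^k)_{k\in\mathbb{N}}$, is bounded; the finitely many initial terms $v^0,\dots,v^{k_0-1}$ do not affect this conclusion.

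For item (ii), the key step is to absorb the perturbation by subtracting the running error. Let $S_k:=\sum_{j=k_0}^{k-1}\epsilon_j$ for $k\ge k_0$, with $S_{k_0}:=0$, and set $w_k:=a_k-S_k$. Then $w_{k+1}=a_{k+1}-S_k-\epsilon_k\le (a_k+\epsilon_k)-S_k-\epsilon_k=w_k$, so $(w_k)_{k\ge k_0}$ is non-increasing; it is also bounded below since $a_k\ge 0$ and $S_k\le \sum_{j=0}^{\infty}\epsilon_j<+\infty$. A monotone bounded sequence converges, and because $S_k\to \sum_{j=k_0}^{\infty}\epsilon_j$, the sequence $a_k=w_k+S_k$ converges as well; taking square roots shows that $(\|v^k-v\|)_{k\in\mathbb{N}}$ converges, as claimed. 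For item (iii), I would apply item (ii) with $v={\bar v}\in S$, so $(\|v^k-{\bar v}\|)_{k\in\mathbb{N}}$ converges to some $\ell\ge 0$. Since ${\bar v}$ is a cluster point, a subsequence satisfies $v^{k_j}\to{\bar v}$, whence $\|v^{k_j}-{\bar v}\|\to 0$; a convergent sequence and its subsequences share the same limit, so $\ell=0$ and therefore $v^k\to{\bar v}$.

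The only genuinely nontrivial idea is the auxiliary sequence $w_k=a_k-S_k$ used in (ii): monotonicity of $(a_k)_{k\in\mathbb{N}}$ itself may fail, and the device of subtracting the cumulative error $S_k$ is precisely what restores a monotone, hence convergent, quantity. Everything else is routine bookkeeping with the summability of $(\epsilon_k)_{k\in\mathbb{N}}$.
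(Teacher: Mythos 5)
Your proof is correct. Note, however, that the paper itself offers no proof of this lemma to compare against: it is stated as a known fact about quasi-Fej\'er convergent sequences (the standard references are Combettes' quasi-Fej\'erian analysis and the Bauschke--Combettes monograph cited elsewhere in the paper). Your argument is the classical one: the telescoped bound $a_k\le a_{k_0}+\sum_{j\ge 0}\epsilon_j$ for boundedness, the compensated sequence $w_k=a_k-S_k$, whose monotonicity plus lower boundedness restores convergence of $a_k=w_k+S_k$, for item (ii), and the combination of (ii) with a convergent subsequence for item (iii); all three steps are sound, and the observation that (ii) is where the real idea lives is accurate. One small point worth making explicit: in the estimates $\sum_{j=k_0}^{k-1}\epsilon_j\le\sum_{j=0}^{\infty}\epsilon_j$ and $S_k\le\sum_{j=0}^{\infty}\epsilon_j$ you implicitly assume $\epsilon_k\ge 0$. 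The paper's definition does not state nonnegativity of the summable sequence, although it is the universal convention; if one insists on allowing signed $\epsilon_k$, simply replace $\epsilon_j$ by $|\epsilon_j|$ in your bounds (the defining inequality still holds with $|\epsilon_j|$ in place of $\epsilon_j$), and the whole argument goes through verbatim.
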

%%%%%%%%%%%%%%%%%%%%%%%
\section{Conditional gradient (CondG) method} \label{Sec:CondG}
In the following  we remind the classical {\it conditional gradient method (CondG$_C$)} to compute feasible inexact projections with respect to a compact  convex set $C$ and some results  related to it. We also prove two   important inequalities  related to this method  that will be useful   to establish    our main results.  For presenting the method, we assume the existence of a linear optimization oracle (or simply LO oracle) capable of minimizing linear functions over the constraint set $C$.  We formally state the  CondG$_{C}$ method to calculate an inexact projection of $v\in {\mathbb R}^n$ with respect $u\in C$,  with the following   input data:   {\it a relative  error tolerance function   $\varphi: {\mathbb R}^n\times {\mathbb R}^n\times {\mathbb R}^n \to {\mathbb R}_{+}$ satisfying the following inequality 
\begin{equation} \label{eq:pfphi}
\varphi_{ \gamma, \theta, \lambda}(u, v, w)\leq \gamma \|v-u\|^2 +\theta  \|w-v\|^2 +   \lambda \|w-u\|^2 , \qquad  \forall~ u, v, w\in  {\mathbb R}^n, 
\end{equation}
 where  $  \gamma, \theta, \lambda \geq 0$  are given forcing parameters}. 
\begin{algorithm}[htb] 
\begin{description}
\item[Input: ]  Take $ \gamma, \theta, \lambda \in {\mathbb R}_+$,  $v\in {\mathbb R}^n$,  $u, w\in C$, and     $\varphi_{ \gamma, \theta, \lambda}$. Set $w_0=w$ and  $\ell=0$.
\item[ Step 1.] Use a LO oracle to compute an optimal solution $z_\ell$ and the optimal value $s_{\ell}^*$ as
\begin{equation}\label{eq:CondG_{C}$}
z_\ell := \arg\min_{z \in  C} \,\langle w_\ell-v, ~z-w_\ell\rangle,  \qquad s_{\ell}^*:=\langle  w_\ell-v, ~z_\ell-w_\ell \rangle.
\end{equation}
\item[ Step 2.] If $-s^*_{\ell}\leq  \varphi_{ \gamma, \theta, \lambda} (u, v, w_\ell)$, then {\bf stop}.  Set $w^+:=w_\ell$.
\item[ Step 3.] Compute $\alpha_\ell \in \, (0,1]$ and $w_{\ell+1}$ as
\begin{equation}\label{eq:stepsize}
w_{\ell+1}:=w_\ell+ \alpha_\ell(z_\ell-w_\ell), \qquad {\alpha}_\ell: =\min\left\{1, \frac{-s^*_{\ell}}{\|z_\ell-w_\ell\|^2}  \right\}.
\end{equation}
\item[ Step 4.] Set $\ell\gets \ell+1$, and go to step {\bf  1}.
\item[Output:]   $w^+:=w_\ell$.
\end{description}
\caption{{\bf CondG$_{C}$ method} $w^+$:=\mbox{CondG$_{C}$}\,$(\varphi_{ \gamma, \theta, \lambda}, u,v)$}
 \label{Alg:CondG}
\end{algorithm}

Let us  describe the main features of  CondG$_{C}$ method; for further details,  see, for example,  \cite{BeckTeboulle2004, Jaggi2013, LanZhou2016}. Let  $v\in {\mathbb R}^n$,    $\psi: \mathbb{R}^n \to \mathbb{R}$ be defined by $\psi(z) := \|z - v\|^2/2$,  and  $C\subset  {\mathbb R}^n$ a convex compact set. It is worth mentioning that the above  \emph{CondG$_{C}$  method}  can be viewed as a specialized version of the classic conditional gradient method  applied to the problem $\min_{z \in C}\psi(z)$. In this case,  \eqref{eq:CondG_{C}$} is equivalent to $s_{\ell}^*:=\min_{z \in C}\langle \psi'(w_\ell) ,~z-w_\ell\rangle$. Since $\psi$
is convex we have
$$
\psi(z)\geq \psi(w_\ell) + \langle \psi'(w_\ell) ,~z-w_\ell\rangle\geq    \psi(w_\ell)  +   s_{\ell}^*, \qquad \forall~z\in C.
$$
Set  $ w_*:=\arg \min_{z \in C}\psi(z)$ and  $\psi^*:= \min_{z \in C}\psi(z)$. Letting $z= x_*$ in the last inequality  we obtain that   $\psi(w_\ell)\geq \psi^* \geq \psi(w_\ell)  +   s_{\ell}^*$, which implies that $s_{\ell}^*\leq 0$. Thus, we conclude that 
$$
-s_{\ell}^*=\langle  v-w_\ell, ~z_\ell-w_\ell \rangle \geq 0\geq  \langle  v-w_*, ~z-w_* \rangle, \qquad \forall~z\in C.
$$
Therefore, we state the  stopping criteria as $-s_{\ell}^*\leq \varphi_{ \gamma, \theta, \lambda}(u, v, w_\ell)$. Moreover, if the  $\emph{CondG$_{C}$}$   method computes  $w_\ell \in C$ satisfying $-s_{\ell}^*\leq  \varphi_{ \gamma, \theta, \lambda}(u, v, w_\ell) $, then the method terminates. Otherwise, it computes the stepsize $\alpha_\ell = \arg\min_{\alpha \in [0,1]} \psi(w_\ell + \alpha(z_\ell - w_\ell))$  using exact minimization.  Since $z_\ell$, $w_\ell \in C$  and $C$ is convex, we conclude from  \eqref{eq:stepsize}  that $w_{\ell+1} \in C$, thus the   $\emph{CondG$_{C}$}$ method generates a sequence in $C$.  Finally,   \eqref{eq:CondG_{C}$} implies that  $\langle  v-w_\ell, ~z-w_\ell\rangle\leq -s_{\ell}^*$, for all  $ z\in C$.  Hence, considering  the  stopping criteria     $-s_{\ell}^*\leq \varphi_{ \gamma, \theta, \lambda}(u, v, w_\ell)$, we conclude that {\it the output of $\mbox{CondG$_{C}$}$ method is  a feasible inexact projection $w^+=\mbox{CondG$_{C}$}\,(\varphi_{ \gamma, \theta, \lambda}, u,v)$ of the point $v\in {\mathbb R}^n$ with respect $u\in C$ onto $C$}, i.e.,   
\begin{equation}\label{inexactset}
\langle  v-w^+, ~z-w^+\rangle\leq   \varphi_{ \gamma, \theta, \lambda}(u, v, w^+), \qquad \forall~z\in C.
\end{equation}
\begin{remark}
Since $C\subset {\mathbb R}^n$ is a closed convex set, then by   Lemma~\ref{le:projeccion} (i)  together with \eqref{inexactset}  we obtain, for any $v\in {\mathbb R}^n$ and $u\in C$, that ${P}_C(v)=\mbox{CondG$_{C}$}\,(\varphi_{ 0, 0,  0}, u,v)$, where ${P}_C(v)$ denotes the exact projection of $v$  onto $C$ and $\varphi_{ 0, 0,  0}(u, v, w)\equiv 0$.
\end{remark}
The following two theorem state   well-known convergence rate for  classic conditional gradient method applied to problem $\min_{z \in C}\psi(z)$, see \cite{GarberHazan2015, Jaggi2013, LevitinPoljak1966}.  Let us first  remind some basic  properties of the function $\psi$ over the set $C$: 
\begin{itemize} 
\item[(i)]$\psi(z)=\psi(w) + \langle \psi'(w) ,~z-w\rangle +\|z-w\|^2/2$, for all $z, w\in C$;
\item[(ii)]  $\psi(z)-\psi(w_*)\geq \|z-w_*\|^2/2$, for all $z\in C$;
\item[(iii)] $\|\psi'(z)\|\geq\psi(w_*)= d(v, C)$, for all $z\in C$.
\end{itemize} 
Since $C$ is a compact  set, we define  the diameter of $C$ by $d_{C}:=\max_{z, w\in C}\|z-w\|$. The statement of the first convergence  result is as follows.
\begin{theorem}  \label{th:fcr}
Let $\{w_{\ell}\}$  be the sequence generated by Algorithm~\ref{Alg:CondG}. Then,  $\psi(w_\ell) -\psi(w_*)\leq (8d_{C}^2)/\ell$, for all $\ell\geq 1$.  Consequently, (by using item (ii) above)  we have  $\|w_\ell -w_*\|\leq 4d_{C}/\ell$, for all $\ell\geq 1$.
\end{theorem}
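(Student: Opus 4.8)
The plan is to carry out the standard Frank--Wolfe convergence analysis, specialized to the exactly quadratic objective $\psi$. Throughout I write $h_\ell:=\psi(w_\ell)-\psi(w_*)\ge 0$ for the optimality gap, and I will establish $h_\ell\le 8d_{C}^2/\ell$ by deriving a one-step recursion and feeding it into an induction on $\ell$; the distance estimate then drops out of property (ii).

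First I would obtain the per-iteration progress inequality. Because $\psi$ is exactly quadratic, property (i) applied with $z=w_{\ell+1}=w_\ell+\alpha_\ell(z_\ell-w_\ell)$ and $w=w_\ell$, together with $\psi'(w_\ell)=w_\ell-v$ and the definition $s_\ell^*=\langle w_\ell-v,\,z_\ell-w_\ell\rangle$, gives the exact identity
\[
\psi(w_{\ell+1})=\psi(w_\ell)+\alpha_\ell s_\ell^*+\frac{\alpha_\ell^2}{2}\|z_\ell-w_\ell\|^2.
\]
The key observation is that the step size $\alpha_\ell$ in \eqref{eq:stepsize} is exactly the minimizer over $[0,1]$ of the convex quadratic $\alpha\mapsto \psi(w_\ell)+\alpha s_\ell^*+(\alpha^2/2)\|z_\ell-w_\ell\|^2$; hence the exact-line-search step dominates every fixed comparison step, so that $\psi(w_{\ell+1})\le \psi(w_\ell)+\alpha s_\ell^*+(\alpha^2/2)\|z_\ell-w_\ell\|^2$ holds for every $\alpha\in[0,1]$.

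Next I would plug in two bounds. Taking $z=w_*\in C$ in the linear minimization that defines $z_\ell$ gives $s_\ell^*\le\langle w_\ell-v,\,w_*-w_\ell\rangle=\langle\psi'(w_\ell),\,w_*-w_\ell\rangle\le\psi(w_*)-\psi(w_\ell)=-h_\ell$, the last inequality being convexity of $\psi$ (property (i) with the nonnegative squared term dropped). Using in addition $\|z_\ell-w_\ell\|\le d_{C}$ and $\alpha\ge 0$, the progress inequality becomes the recursion
\[
h_{\ell+1}\le (1-\alpha)h_\ell+\frac{\alpha^2}{2}\,d_{C}^2,\qquad \forall\,\alpha\in[0,1].
\]
I would then prove $h_\ell\le 8d_{C}^2/\ell$ by induction. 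The base case $\ell=1$ comes from the recursion at $\ell=0$ with $\alpha=1$, giving $h_1\le d_{C}^2/2\le 8d_{C}^2$. For the inductive step I would insert the comparison value $\alpha=2/(\ell+2)$ and the hypothesis $h_\ell\le 8d_{C}^2/\ell$, which reduces the desired bound $h_{\ell+1}\le 8d_{C}^2/(\ell+1)$ to the elementary inequality $(8\ell+18)(\ell+1)\le 8(\ell+2)^2$, that is $6\ell+14\ge 0$, true for all $\ell\ge 1$. Finally, the ``consequently'' clause is immediate: by property (ii), $\tfrac12\|w_\ell-w_*\|^2\le \psi(w_\ell)-\psi(w_*)=h_\ell\le 8d_{C}^2/\ell$, so taking square roots yields $\|w_\ell-w_*\|\le 4d_{C}/\sqrt{\ell}$.

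The content here is classical, so the difficulty is organizational rather than conceptual. The two points that require care are recognizing that the algorithm's exact line search does at least as well as any convenient fixed step $\alpha\in[0,1]$ (which is precisely what lets one substitute a schedule into the recursion), and selecting the schedule $\alpha=2/(\ell+2)$ that makes the induction close with the stated constant. Notably the argument leaves slack: a constant smaller than $8$ works in the identical way, so the bound as stated is safely conservative.
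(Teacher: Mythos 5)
Your argument is correct in substance, but the first thing to note is that the paper itself contains no proof of Theorem~\ref{th:fcr}: it declares the rate ``well-known'' and defers to \cite{GarberHazan2015, Jaggi2013, LevitinPoljak1966}. So your self-contained derivation supplies what the paper omits, and it is the standard Frank--Wolfe analysis specialized to the quadratic $\psi$: the exact expansion of $\psi$ along the segment, the observation that the exact line search step \eqref{eq:stepsize} is the minimizer of that quadratic over $[0,1]$ and hence dominates any fixed comparison step $\alpha$, the bound $s_\ell^*\le -h_\ell$ from the LO oracle combined with convexity, the diameter bound $\|z_\ell-w_\ell\|\le d_C$, and the induction with the schedule $\alpha=2/(\ell+2)$. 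I checked the algebra: the recursion $h_{\ell+1}\le(1-\alpha)h_\ell+(\alpha^2/2)d_C^2$ is valid, the base case gives $h_1\le d_C^2/2$, and the inductive step reduces to $8(\ell+2)^2-(8\ell+18)(\ell+1)=6\ell+14\ge 0$ as you claim. Your closing remark is also right that the constant $8$ is slack; the same induction closes with constant $2$.

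The one point you must state explicitly rather than silently repair concerns the ``consequently'' clause. The theorem as printed asserts $\|w_\ell-w_*\|\le 4d_C/\ell$, but item (ii) together with the gap bound yields only $\|w_\ell-w_*\|^2\le 2h_\ell\le 16d_C^2/\ell$, i.e. $\|w_\ell-w_*\|\le 4d_C/\sqrt{\ell}$, which is exactly what you derive. The $1/\ell$ decay of the distance simply does not follow from the stated gap bound via item (ii) (nor is it the known rate for the conditional gradient method on a general compact convex set), so the paper's second assertion is evidently a typo or error; the same slip appears in the distance bound of Theorem~\ref{eq:rccgm2}, where a square root is likewise missing. As written, your proposal proves a statement that differs from the theorem's literal second claim without acknowledging the discrepancy; you should flag that what you have proved is the corrected version, and that the version in the paper is not a consequence of the first bound.
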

The rate of convergence in Theorem~\ref{th:fcr} is improved for $\alpha_C$-strongly convex.  We say that a convex set $C$  is {\it $\alpha_C$-strongly convex} if,  for any $z, w\in C$, $t\in [0, 1]$ and any vector $u\in {\mathbb R}^n$, it holds that  $tz+(1-t)w+(\alpha_C/2)t(1-t)\|z-w\|^2u\in C$.  
\begin{theorem}  \label{eq:rccgm2}
Assume that $C$ is  a $\alpha_C$-strongly convex set.  Let $\{w_{\ell}\}$  be the sequence generated by Algorithm~\ref{Alg:CondG} and set  $q:=\max\{1/2, 1-\alpha_Cd(v,C)/8\}<1$. Then,  $\psi(w_{\ell+1}) -\psi(w_*)\leq q\left(\psi(w_{\ell}) -\psi(w_*)\right)$, for all $\ell\geq 1$.  Consequently,  we have an  exponentially convergence rate as follows $\psi(w_\ell) -\psi(w_*)\leq (\psi(w_0) -\psi(w_*))q^\ell$, for all $\ell\geq 1$. Furthermore,  (by using item (ii) above)  we have  $\|w_\ell -w_*\|\leq  (\psi(w_0) -\psi(w_*))q^\ell$, for all $\ell\geq 1$.
\end{theorem}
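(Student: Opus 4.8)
The plan is to establish the one-step contraction $\psi(w_{\ell+1}) - \psi(w_*) \leq q\big(\psi(w_\ell) - \psi(w_*)\big)$; the exponential rate then follows by a trivial induction from $\ell = 0$, and the distance bound follows by invoking property (ii). Throughout I abbreviate $h_\ell := \psi(w_\ell) - \psi(w_*) \geq 0$, write $g_\ell := -s_\ell^* = \langle \psi'(w_\ell), w_\ell - z_\ell\rangle \geq 0$ for the Frank--Wolfe gap, and set $D_\ell := \|z_\ell - w_\ell\|$.

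First I would extract the per-iteration decrease from the exact line search. Since $\psi$ is quadratic, property (i) gives the identity $\psi(w_\ell + \alpha(z_\ell - w_\ell)) = \psi(w_\ell) + \alpha s_\ell^* + (\alpha^2/2)D_\ell^2$, whose minimizer over $[0,1]$ is exactly the stepsize $\alpha_\ell = \min\{1, g_\ell/D_\ell^2\}$ prescribed in \eqref{eq:stepsize}. Substituting and splitting into the two cases $\alpha_\ell = g_\ell/D_\ell^2$ and $\alpha_\ell = 1$ yields, after a short computation, the unified bound $h_\ell - h_{\ell+1} \geq \tfrac12\min\{g_\ell,\, g_\ell^2/D_\ell^2\}$.

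The crux --- and the only place the strong convexity of $C$ enters --- is a lower bound on the gap $g_\ell$ in terms of $D_\ell$. Here I would apply the definition of $\alpha_C$-strong convexity to the pair $z = z_\ell$, $w = w_\ell$ with $t = 1/2$ and the unit vector $u = -\psi'(w_\ell)/\|\psi'(w_\ell)\|$ (well defined since property (iii) forces $\|\psi'(w_\ell)\| \geq d(v,C) > 0$ in the regime $q < 1$), so that $p := (z_\ell + w_\ell)/2 - (\alpha_C/8)D_\ell^2\,\psi'(w_\ell)/\|\psi'(w_\ell)\| \in C$. Since $z_\ell$ minimizes $z \mapsto \langle \psi'(w_\ell), z\rangle$ over $C$, the comparison $\langle \psi'(w_\ell), z_\ell\rangle \leq \langle \psi'(w_\ell), p\rangle$ simplifies to $g_\ell \geq (\alpha_C/4) D_\ell^2\,\|\psi'(w_\ell)\|$, whence property (iii) gives $D_\ell^2 \leq 4g_\ell/(\alpha_C d(v,C))$. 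I expect this to be the main obstacle, since it requires guessing the right auxiliary point $p$, and it is precisely what upgrades the $O(1/\ell)$ rate of Theorem~\ref{th:fcr} to a linear one.

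Finally I would assemble the pieces. The bound on $D_\ell^2$ gives $g_\ell^2/D_\ell^2 \geq (\alpha_C d(v,C)/4)\,g_\ell$, so $\min\{g_\ell,\, g_\ell^2/D_\ell^2\} \geq \min\{1,\, \alpha_C d(v,C)/4\}\,g_\ell$; combined with the standard gap inequality $g_\ell \geq \langle \psi'(w_\ell), w_\ell - w_*\rangle \geq h_\ell$ (convexity together with $w_* \in C$) and the decrease bound above, this yields $h_{\ell+1} \leq \big(1 - \tfrac12\min\{1,\, \alpha_C d(v,C)/4\}\big)h_\ell = q\,h_\ell$, which is exactly the claimed contraction with $q = \max\{1/2,\, 1 - \alpha_C d(v,C)/8\}$. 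Iterating gives $h_\ell \leq q^\ell h_0$, and applying property (ii) in the form $\|w_\ell - w_*\|^2 \leq 2h_\ell$ converts this into the stated rate for $\|w_\ell - w_*\|$.
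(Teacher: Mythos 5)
Your proof of the one-step contraction is correct, and it is worth noting that the paper itself contains no proof of this theorem: it is stated as a known result with pointers to Garber--Hazan, Jaggi, and Levitin--Polyak. Your argument reconstructs exactly the standard route from those references, so in that sense it matches the ``paper's proof'' as closely as possible: exact line search on the quadratic gives $h_\ell - h_{\ell+1} \geq \tfrac12\min\{g_\ell,\, g_\ell^2/D_\ell^2\}$; strong convexity of $C$, applied at $t=1/2$ with the unit vector $u=-\psi'(w_\ell)/\|\psi'(w_\ell)\|$, gives $g_\ell \geq (\alpha_C/4)D_\ell^2\|\psi'(w_\ell)\| \geq (\alpha_C/4)D_\ell^2\, d(v,C)$; and the Frank--Wolfe gap dominates the primal gap, $g_\ell \geq h_\ell$. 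Combining these yields $h_{\ell+1}\leq \bigl(1-\tfrac12\min\{1,\alpha_C d(v,C)/4\}\bigr)h_\ell = q\,h_\ell$ with precisely $q=\max\{1/2,\,1-\alpha_C d(v,C)/8\}$, and the induction is immediate.

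Two remarks. First, you silently (and correctly) repaired the paper's definition of $\alpha_C$-strong convexity, which as written allows ``any vector $u\in\mathbb{R}^n$''; with genuinely arbitrary $u$ no bounded set satisfies it, and your choice of a \emph{unit} vector is what the definition intends and what the argument needs. Second, your final sentence inherits a flaw that sits in the theorem's own statement rather than in your reasoning: from $h_\ell \leq h_0 q^\ell$, property (ii) gives $\|w_\ell - w_*\| \leq \sqrt{2h_0}\,q^{\ell/2}$, a geometric rate with ratio $\sqrt{q}$, and not the stated $\|w_\ell-w_*\|\leq h_0 q^\ell$, which does not follow (for large $\ell$ one has $q^{\ell/2}\gg q^\ell$). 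The same sloppiness occurs in Theorem~\ref{th:fcr}, where the claimed $4d_C/\ell$ should be $4d_C/\sqrt{\ell}$; it does not affect the substance of your contraction argument, but the converted distance bound should be stated with ratio $\sqrt{q}$ and constant $\sqrt{2(\psi(w_0)-\psi(w_*))}$.
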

Let us  present   two  useful  properties  of $\emph{CondG$_{C}$}$ method  that will  play important roles  in the remainder of this paper.
\begin{lemma} \label{pr:condi}
Let $v \in {\mathbb R}^n$, $u \in C$,  $\gamma, \theta, \lambda \geq 0$ and   $w^+=\mbox{CondG$_{C}$}\,(\varphi_{ \gamma, \theta, \lambda}, u,v)$. Then,  there holds
\begin{equation}  \label{eq:fep1}
 \|w^+-z\|^2\leq \|v-z\|^2+ \frac{2\gamma+2\lambda}{1-2\lambda}\|v-u\|^2- \frac{1-2\theta}{1-2\lambda} \|w^+ -v\|^2, \qquad z\in C, 
\end{equation}
for $0\leq \lambda<1/2$.   Consequently, if $z=P_C(v)$ then 
\begin{equation}  \label{eq:fep2}
\|w^+-P_C(v)\|^2\leq  \frac{2\gamma+2\lambda}{1-2\lambda}\|v-u\|^2+\frac{2\theta}{1-2\lambda} \|w^+-v\|^2.
\end{equation}
\end{lemma}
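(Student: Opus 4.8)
The plan is to derive both inequalities from the single output property \eqref{inexactset}, exploited twice. The natural starting point is the polarization identity
$$\|w^+-z\|^2 = \|v-z\|^2 - \|w^+-v\|^2 + 2\langle v-w^+, z-w^+\rangle,$$
valid for every $z\in{\mathbb R}^n$. First I would fix an arbitrary $z\in C$, bound the inner product using \eqref{inexactset} and then the forcing estimate \eqref{eq:pfphi}, arriving at the preliminary inequality
$$\|w^+-z\|^2 \le \|v-z\|^2 - (1-2\theta)\|w^+-v\|^2 + 2\gamma\|v-u\|^2 + 2\lambda\|w^+-u\|^2.$$
Everything up to here is routine; the whole difficulty is concentrated in the residual term $2\lambda\|w^+-u\|^2$, which must be eliminated before the claimed coefficients can appear.

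The main obstacle is precisely how to dispose of $\|w^+-u\|^2$. A naive triangle or Young estimate of $\|w^+-u\|^2$ in terms of $\|w^+-v\|^2$ and $\|v-u\|^2$ is doomed: any such bound contributes a \emph{nonnegative} multiple of $\|w^+-v\|^2$, whereas the target coefficient $-\tfrac{1-2\theta}{1-2\lambda}$ is strictly more negative than the $-(1-2\theta)$ already present, so it can never be reached by adding positive terms. The resolution — and the key idea — is to invoke the output property a second time, now at the admissible point $z=u\in C$. Applying the same identity together with \eqref{inexactset} and \eqref{eq:pfphi} at $z=u$ produces an inequality in which $\|w^+-u\|^2$ occurs on both sides; since $0\le\lambda<1/2$ one may collect it on the left and divide, obtaining the auxiliary estimate
$$(1-2\lambda)\|w^+-u\|^2 \le (1+2\gamma)\|v-u\|^2 - (1-2\theta)\|w^+-v\|^2.$$
It is this self-referential use of the hypothesis $\lambda<1/2$ that manufactures both the factor $1/(1-2\lambda)$ and the extra negativity carried onto $\|w^+-v\|^2$.

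Substituting this auxiliary bound for $2\lambda\|w^+-u\|^2$ in the preliminary inequality and collecting coefficients then yields \eqref{eq:fep1}; I would verify that the $\|w^+-v\|^2$ coefficient simplifies as $-(1-2\theta)\bigl(1+\tfrac{2\lambda}{1-2\lambda}\bigr)=-\tfrac{1-2\theta}{1-2\lambda}$ and the $\|v-u\|^2$ coefficient as $\tfrac{2\gamma(1-2\lambda)+2\lambda(1+2\gamma)}{1-2\lambda}=\tfrac{2\gamma+2\lambda}{1-2\lambda}$, exactly the asserted values. Finally, \eqref{eq:fep2} follows by specializing \eqref{eq:fep1} to $z=P_C(v)$: since $w^+\in C$ and $P_C(v)$ is the nearest point of $C$ to $v$, one has $\|v-P_C(v)\|^2\le\|v-w^+\|^2$ by Lemma~\ref{le:projeccion}; replacing $\|v-P_C(v)\|^2$ accordingly turns the $\|w^+-v\|^2$ coefficient into $1-\tfrac{1-2\theta}{1-2\lambda}=\tfrac{2\theta-2\lambda}{1-2\lambda}\le\tfrac{2\theta}{1-2\lambda}$, where discarding the harmless $-2\lambda$ delivers the stated form.
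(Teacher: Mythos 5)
Your proposal is correct and follows essentially the same route as the paper's proof: the same polarization identity, the same preliminary bound with the residual $2\lambda\|w^+-u\|^2$, the same key step of invoking \eqref{inexactset} a second time at $z=u\in C$ to get $(1-2\lambda)\|w^+-u\|^2\le(1+2\gamma)\|v-u\|^2-(1-2\theta)\|w^+-v\|^2$, and the same specialization $z=P_C(v)$ with $\|v-P_C(v)\|\le\|v-w^+\|$ and discarding of the $-2\lambda$ term to obtain \eqref{eq:fep2}. The coefficient computations check out, so nothing further is needed.
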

\begin{proof}
First, note that $  \|w^+ -z\|^2 = \|v -z\|^2 - \|v-w^+\|^2+2\langle v-w^+,z-w^+ \rangle$,  for all $z  \in C$.  Since $w^+=\mbox{CondG$_{C}$}\,(\varphi_{ \gamma, \theta, \lambda}, u,v)$, combining the last  inequality  with \eqref{inexactset} and \eqref{eq:pfphi}, after some algebraic manipulation,  we  obtain
\begin{equation} \label{eq:fg}
\|w^+-z\|^2\leq \|v-z\|^2+2 \gamma\|v-u\|^2-(1-2\theta)\|v-w^+\|^2+2\lambda\|w^+-u\|^2, \qquad z\in C.
\end{equation}
On the other hand,    $ \|w^+-u\|^2=\|w^+-v\|^2+\|u-v\|^2-2\langle w^+-v,u-v\rangle$, which  implies that  
$$
 \|w^+-u\|^2= \|u-v\|^2- \|w^+-v\|^2 +2 \langle v-w^+,u-w^+\rangle.
$$
Since $w^+=\mbox{CondG$_{C}$}\,(\varphi_{ \gamma, \theta, \lambda}, u,v)$ and $u \in C$, using  \eqref{inexactset} with $z=u$ and \eqref{eq:pfphi} with $w=w^+$, after some calculations,  the last equation  implies 
$$
\|w^+-u\|^2\leq \frac{1+2\gamma}{1-2\lambda}\|v-u\|^2- \frac{1-2\theta}{1-2\lambda} \|w^+ -v\|^2.
$$
Combining last inequality with \eqref{eq:fg} we obtain   \eqref{eq:fep1}. We proceed  to prove   \eqref{eq:fep2}. First note that  letting $z=P_B(v)$ into   inequality  \eqref{eq:fep1},  the resulting inequality can be  equivalently rewriting as follows 
$$
\|w^+-P_C(v)\|^2\leq \|v-P_C(v)\|^2-\|w^+-v\|^2 + \frac{2\gamma+2\lambda}{1-2\lambda}\|v-u\|^2- \frac{2\lambda-2\theta}{1-2\lambda} \|w^+-v\|^2.
$$
Thus, considering that  $\|v-P_C(v)\|^2-\|w^+-v\|^2\leq 0$ and  $0\leq \lambda<1/2$  the desired inequality  follows, which concludes  the  proof. 
\end{proof}
\begin{corollary} \label{cr:condi}
Assume that  set   $E:=\{z\in C: \|z-P_{D}(z)\|=dist(C,D)\} \neq  \varnothing$. Let  $v \in D$, $u \in C$,  $\gamma, \theta, \lambda \geq 0$,  $w^+=\mbox{CondG$_{C}$}\,(\varphi_{ \gamma, \theta, \lambda}, u,v)$. Then,   for each ${\bar z} \in E$,  there holds
\begin{multline*} 
\|w^+-{\bar z}\|^2 \leq \|u-{\bar z}\|^2 + 2 \langle u-v, P_D(\bar{z})-v\rangle +\frac{2\gamma +2\theta}{1-2\lambda}\|v-u\|^2- \frac{2\lambda-2\theta}{1-2\lambda} \|w^+-v\|^2. 
\end{multline*}
for $0\leq \lambda<1/2$. 
\end{corollary}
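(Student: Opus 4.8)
The plan is to start from the estimate already proved in Lemma~\ref{pr:condi}. Evaluating inequality \eqref{eq:fep1} at the feasible point $z=\bar z\in C$ gives
$$
\|w^+-\bar z\|^2 \le \|v-\bar z\|^2 + \frac{2\gamma+2\lambda}{1-2\lambda}\|v-u\|^2 - \frac{1-2\theta}{1-2\lambda}\|w^+-v\|^2,
$$
so the whole task reduces to re-expressing $\|v-\bar z\|^2$ in terms of $\|u-\bar z\|^2$ and the inner-product term $\langle u-v,P_D(\bar z)-v\rangle$ appearing in the claim. This is the natural thing to try, since the statement differs from \eqref{eq:fep1} only in that $\|v-\bar z\|^2$ has been traded for $\|u-\bar z\|^2+2\langle u-v,P_D(\bar z)-v\rangle$ together with a shifted coefficient on $\|w^+-v\|^2$.

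First I would record the purely algebraic identity
$$
\|v-\bar z\|^2 = \|u-\bar z\|^2 - \|v-u\|^2 + 2\langle u-v,P_D(\bar z)-v\rangle + 2\langle v-u,P_D(\bar z)-\bar z\rangle,
$$
obtained by inserting $u$ and $P_D(\bar z)$ and expanding the squared norm; it uses only the inner-product structure, no convexity. This isolates exactly the inner-product term demanded by the statement, at the cost of the residual cross term $2\langle v-u,P_D(\bar z)-\bar z\rangle$, which is where the real work lies.

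To absorb that residual term I would apply Young's inequality in the form $2\langle v-u,P_D(\bar z)-\bar z\rangle \le \|v-u\|^2 + \|P_D(\bar z)-\bar z\|^2$ and then pin down $\|P_D(\bar z)-\bar z\|$ using two facts. Since $\bar z\in E$ we have the \emph{equality} $\|P_D(\bar z)-\bar z\|=dist(C,D)$, while, because $w^+\in C$ (every iterate produced by CondG$_{C}$ lies in $C$) and $v\in D$, the definition of the distance between the sets forces the \emph{inequality} $\|w^+-v\|\ge dist(C,D)$. Chaining these gives $\|P_D(\bar z)-\bar z\|^2 \le \|w^+-v\|^2$, so the residual is bounded by $\|v-u\|^2+\|w^+-v\|^2$. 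This is the crux of the argument: it is what lets the estimate close without leaving a stray $dist(C,D)^2$ term behind.

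Finally I would substitute these bounds into \eqref{eq:fep1} and collect like terms. The $-\|v-u\|^2$ coming from the identity cancels the $+\|v-u\|^2$ produced by Young's inequality, leaving the $\|v-u\|^2$-coefficient already carried by \eqref{eq:fep1}; and the newly generated $+\|w^+-v\|^2$ combines with the coefficient $-\tfrac{1-2\theta}{1-2\lambda}$ of \eqref{eq:fep1} to produce exactly $-\tfrac{2\lambda-2\theta}{1-2\lambda}$, as in the statement. Everything after the identity is routine bookkeeping; the only genuinely non-routine point is the treatment of the residual cross term, resting on the observation that $\|P_D(\bar z)-\bar z\|$ equals $dist(C,D)$ and is therefore dominated by $\|w^+-v\|$.
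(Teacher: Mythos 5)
Your proof is correct and follows essentially the same route as the paper's own: the paper likewise evaluates \eqref{eq:fep1} at $z=\bar z$, uses the same expansion of $\|v-\bar z\|^2$ through $u$ and $P_D(\bar z)$, and absorbs the residual cross term $2\langle u-v,\bar z-P_D(\bar z)\rangle$ via Cauchy--Schwarz together with the key observation $\|\bar z-P_D(\bar z)\|=dist(C,D)\le\|w^+-v\|$. The only caveat is that both your derivation and the paper's actually produce the coefficient $\frac{2\gamma+2\lambda}{1-2\lambda}$ on $\|v-u\|^2$, so the $\frac{2\gamma+2\theta}{1-2\lambda}$ appearing in the stated corollary (and in the paper's intermediate inequality \eqref{eq:stig}) is a $\theta$--$\lambda$ typo in the paper rather than a gap in your argument.
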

\begin{proof}
Applying  \eqref{eq:fep1} of Lemma \ref{pr:condi} with $z={\bar z}$ we obtain 
$$
\|w^+-{\bar z}\|^2\leq \|v-{\bar z}\|^2 +\frac{2\gamma+2\lambda}{1-2\lambda}\|v-u\|^2- \frac{1-2\theta}{1-2\lambda} \|w^+-v\|^2, 
$$
which is  equivalently to 
\begin{equation} \label{eq:stig}
\|w^+-{\bar z}\|^2 \leq  \|v-{\bar z}\|^2 -\|w^+-v\|^2+\frac{2\gamma+2\theta}{1-2\lambda}\|v-u\|^2- \frac{2\lambda-2\theta}{1-2\lambda} \|w^+-v\|^2. 
\end{equation}
First,  note that     $\|u-v\|^2+\|v-{\bar z}\|^2 = \|u-{\bar z}\|^2 +2\langle u-v, {\bar z}-v\rangle$. It turns out that $\langle u-v, {\bar z}-v\rangle= \langle u-v, {\bar z}-P_D(\bar{z})\rangle + \langle u-v, P_D({\bar z})-v\rangle$. Hence,    from these two equalities, we obtain
\begin{equation} \label{eq:filt}
 \|u-v\|^2+\|v-{\bar z}\|^2 = \|u-{\bar z}\|^2 +2\langle u-v, {\bar z}-P_D(\bar{z})\rangle + 2\langle u-v, P_D({\bar z})-v\rangle.
 \end{equation}
 Due to $\bar{z}\in E$,  we have  $\|\bar{z}-P_B(\bar{z})\|=dist(C,D)\leq \|w^+-v\|$ and,   by Cauchy-Schwartz inequality,     $2\langle u-v, \bar{z}-P_D(\bar{z})\rangle\leq 2\|u-v\| \|\bar{z}-P_D(\bar{z})\|\leq \|u-v\|^2+\|\bar{z}-P_D(\bar{z})\|^2$. These inequalities imply $2\langle u-v, \bar{z}-P_D(\bar{z})\rangle\leq \|u-v\|^2  +   \|w^+-v\|^2 $, which combined  with \eqref{eq:filt} yields
 $$
 \|u-v\|^2+\|v-{\bar z}\|^2 \leq  \|u-{\bar z}\|^2 +  \|u-v\|^2  +   \|w^+-v\|^2 +2\langle u-v, P_D({\bar z})-v\rangle. 
 $$
This inequality is equivalente to $\|v-{\bar z}\|^2-  \|w^+-v\|^2  \leq  \|u-{\bar z}\|^2  +2\langle u-v, P_D({\bar z})-v\rangle$, which together  with \eqref{eq:stig} yields the desired result. 
\end{proof}
Let us end this section by presenting some examples of functions satisfying   \eqref{eq:pfphi}.
\begin{example}
The  functions  $\varphi_1, \varphi_3, \varphi_3,  \varphi_4,  \varphi_5,   : {\mathbb R}^n\times {\mathbb R}^n\times {\mathbb R}^n \to {\mathbb R}_{+}$   defined by 
$\varphi_1(u, v, w):=\gamma\|v-u\|^2$, $\varphi_2(u, v, w):= \theta\|w-v\|^2$,   $\varphi_3(u, v, w):= \gamma \theta\|v-u\|\|w-v\|$ and $\varphi_5(u, v, w)= \gamma \|v-u\|^2 +\theta  \|w-v\|^2 + \lambda \|w-u\|^2 $ satisfy \eqref{eq:pfphi}.
\end{example}
%%%%%%%%%%%%%%%%%%%%%%%%%%%%%%%%%%%%%%%%%%%%%%
\section{The ACondG method with  inexact  projection onto one set} \label{Sec:ACondG-1}
%%%%%%%%%%%%%%%%%%%%%%%%%%%%%%%%%%%%%%%%%%%%%%
Next we present our  first version of inexact  alternating  projection method  to solve Problem~\eqref{def:InexactProjProb}, by using the CondG method to compute feasible inexact projections with respect to  one of the sets in consideration, which will be  named as {\it alternating conditional gradient-1 (ACondG-1)} method. For that,  we assume that to  find  an exact project onto the convex set $B$, not necessarily  compact,  is an easy task. We also assume that  $A$ is a  convex and compact set and the projection onto it  can be approximate  by using the CondG$_{A}$ method. In this case,  the  {\it ACondG-1 method with feasible inexact projections onto one of the sets},   for solving  the classic feasibility Problem~\eqref{def:InexactProjProb},  is formally defined as follows:
\begin{algorithm} 
\begin{description}
\item[ Step 0.] Let   $(\lambda_k)_{k\in\mathbb{N}}$, $(\gamma_k)_{k\in\mathbb{N}}$ and $(\theta_k)_{k\in\mathbb{N}}$  be  sequences of nonnegative real numbers and the associated function $\varphi_k:=\varphi_{ \gamma_k, \theta_k, \lambda_k}$, as defined in \eqref{eq:pfphi}. Let $x_0\in A$.    If $x^0\in B$, then {\bf stop}. Otherwise, initialize $k\leftarrow 0$.
\item[ Step 1.] Compute $P_B(x^k)$  and  set the next iterate  $y^{k+1}$ as  follows 
\begin{equation} \label{eq:P_A(x^k)}
y^{k+1}:=P_B(x^k).
\end{equation}
If   $y_{k+1}\in A$, then {\bf stop}. 
\item[ Step 2.] Use   Agorithm \ref{Alg:CondG} to   compute $\mbox{CondG$_{A}$}\,(\varphi_k, x^k,y^{k+1})$   and set the  iterate  $x^{k+1}$ as  follows  
\begin{equation} \label{eq:P_B(y^k)}
x^{k+1}:=\mbox{CondG$_{A}$}\,(\varphi_k, x^k,y^{k+1}).
\end{equation}
If   $x_{k+1}\in B$, then {\bf stop}. 
\item[ Step 3.]  Set $k\gets k+1$, and go to \textbf{Step~1}.
\end{description}
\caption{CondG method with  inexact  projection onto one set (ACondG-1)}
\label{Alg:ACondG-1}
\end{algorithm}

First of all note that $x^k\in A$ and  $y^k\in B$. Thus, if Algorithm~\ref{Alg:ACondG-1} stops, it means that a point belonging to $A\cap B$ has been found. Therefore,  we assume that   $(x^k)_{k\in\mathbb{N}}$ and $(y^k)_{k\in\mathbb{N}}$  generated by Algorithm~\ref{Alg:ACondG-1} are  infinity sequences.  In the following we will analyze Algorithm~\ref{Alg:ACondG-1}   first assuming that $A\cap B$ is nonempty   and,   then,  considering that $A\cap B$ is empty.
%%%%%%%%%%%%%%%%%%%%%%%
\subsection{The ACondG-1 method  for two sets with nonempty intersection} 
In this section we assume that  $A\cap B\neq  \varnothing$. To proceed with the convergence analysis of ACondG-1 method we need to {\it assume   that the forcing sequences $(\lambda_k)_{k\in\mathbb{N}}$, $(\gamma_k)_{k\in\mathbb{N}}$ and $(\theta_k)_{k\in\mathbb{N}}$   satisfy }
\begin{equation} \label{eq:fsa}
 \theta_k\leq  \theta< 1/2, \qquad  2\gamma_k+4\lambda_k< \sigma<1  \qquad k=0, 1, \ldots, 
\end{equation}
where $ { \theta}$ and  $ \sigma$ are positive  real constants. 
\begin{theorem}
The sequences   $(x^k)_{k\in\mathbb{N}}$ and  $(y^k)_{k\in\mathbb{N}}$  converge to a point belonging  to $A\cap B\neq \varnothing$. 
\end{theorem}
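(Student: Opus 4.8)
The plan is to show that $(x^k)_{k\in\mathbb{N}}$ is Fej\'er convergent to $A\cap B$ and asymptotically regular, so that Lemma~\ref{le:ppseq} identifies its cluster points as points of $A\cap B$ and Lemma~\ref{fejer} upgrades this to convergence of the whole sequence. I would begin by fixing an arbitrary $x^*\in A\cap B$ and producing a Fej\'er-type inequality. Applying \eqref{eq:fep1} of Lemma~\ref{pr:condi} to the step \eqref{eq:P_B(y^k)} (so that $C=A$, $u=x^k$, $v=y^{k+1}$, $w^+=x^{k+1}$) with $z=x^*\in A$ gives
\begin{equation*}
\|x^{k+1}-x^*\|^2\leq \|y^{k+1}-x^*\|^2 + \frac{2\gamma_k+2\lambda_k}{1-2\lambda_k}\|y^{k+1}-x^k\|^2 - \frac{1-2\theta_k}{1-2\lambda_k}\|x^{k+1}-y^{k+1}\|^2,
\end{equation*}
while $y^{k+1}=P_B(x^k)$ from \eqref{eq:P_A(x^k)} and $x^*\in B$ allow Lemma~\ref{le:projeccion}(ii) to yield $\|y^{k+1}-x^*\|^2\leq \|x^k-x^*\|^2-\|y^{k+1}-x^k\|^2$. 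Substituting the latter into the former and collecting the coefficient of $\|y^{k+1}-x^k\|^2$, which equals $(2\gamma_k+4\lambda_k-1)/(1-2\lambda_k)$, I obtain
\begin{equation*}
\|x^{k+1}-x^*\|^2\leq \|x^k-x^*\|^2 - \frac{1-2\gamma_k-4\lambda_k}{1-2\lambda_k}\|y^{k+1}-x^k\|^2 - \frac{1-2\theta_k}{1-2\lambda_k}\|x^{k+1}-y^{k+1}\|^2.
\end{equation*}
By the forcing conditions \eqref{eq:fsa} one has $\lambda_k<1/4$, $1-2\gamma_k-4\lambda_k>1-\sigma>0$ and $1-2\theta_k>1-2\theta>0$, so both fractions are positive and the two subtracted terms are nonnegative.

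Dropping these nonnegative terms gives $\|x^{k+1}-x^*\|^2\leq\|x^k-x^*\|^2$, so $(x^k)_{k\in\mathbb{N}}$ is Fej\'er convergent to $A\cap B$; Lemma~\ref{fejer}(i)--(ii) then supplies boundedness and convergence of $(\|x^k-x^*\|)_{k\in\mathbb{N}}$. Summing the displayed inequality over $k$ and using $1-2\lambda_k\leq 1$ to get the uniform bounds $(1-2\gamma_k-4\lambda_k)/(1-2\lambda_k)\geq 1-\sigma$ and $(1-2\theta_k)/(1-2\lambda_k)\geq 1-2\theta$, the telescoping sum is bounded by $\|x^0-x^*\|^2$, whence $\sum_k\|y^{k+1}-x^k\|^2<\infty$ and $\sum_k\|x^{k+1}-y^{k+1}\|^2<\infty$; in particular $\|y^{k+1}-x^k\|\to 0$ and $\|x^{k+1}-y^{k+1}\|\to 0$.

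With these two limits I would verify the hypotheses of Lemma~\ref{le:ppseq} for $C=A$, $D=B$, $v^k=x^k$, $w^k=y^k$. Condition (c2) holds since $y^{k+1}=P_B(x^k)$ makes $\|y^{k+1}-P_B(x^k)\|=0$, while \eqref{eq:fep2} of Lemma~\ref{pr:condi} bounds $\|x^{k+1}-P_A(y^{k+1})\|^2$ by a bounded multiple of $\|y^{k+1}-x^k\|^2+\|x^{k+1}-y^{k+1}\|^2\to 0$. For (c1) the triangle inequality gives $\|x^{k+1}-x^k\|\leq\|x^{k+1}-y^{k+1}\|+\|y^{k+1}-x^k\|\to 0$, and, since $\|x^k-y^k\|\to 0$ is merely the reindexed statement $\|x^{k+1}-y^{k+1}\|\to 0$, also $\|y^{k+1}-y^k\|\leq\|y^{k+1}-x^k\|+\|x^k-y^k\|\to 0$. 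Then Lemma~\ref{le:ppseq} shows every cluster point $\bar x$ of $(x^k)_{k\in\mathbb{N}}$ satisfies $\|\bar x-P_B(\bar x)\|=dist(A,B)$; as $A\cap B\neq\varnothing$ we have $dist(A,B)=0$, so $\bar x=P_B(\bar x)\in B$, and $\bar x\in A$ because $A$ is closed and $(x^k)_{k\in\mathbb{N}}\subset A$. Hence $\bar x\in A\cap B$, and Lemma~\ref{fejer}(iii) forces the whole sequence $(x^k)_{k\in\mathbb{N}}$ to converge to $\bar x$; since $\|x^k-y^k\|\to 0$, the sequence $(y^k)_{k\in\mathbb{N}}$ converges to the same point.

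The step I expect to be the main obstacle is the algebraic combination in the first paragraph together with the sign and lower-bound bookkeeping: one must correctly match the roles $u=x^k$, $v=y^{k+1}$, $w^+=x^{k+1}$ in \eqref{eq:fep1}, cancel $\|y^{k+1}-x^k\|^2$ against the term coming from Lemma~\ref{le:projeccion}(ii), and read off from \eqref{eq:fsa} that the resulting coefficient of $\|y^{k+1}-x^k\|^2$ is negative and uniformly bounded away from zero. Once the Fej\'er inequality with the correct positive coefficients is in hand, the remainder is the standard Fej\'er-plus-asymptotic-regularity argument fed into Lemma~\ref{le:ppseq}.
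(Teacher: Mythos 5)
Your proof is correct, and its core is identical to the paper's: the same application of Lemma~\ref{le:projeccion}(ii) to the exact step $y^{k+1}=P_B(x^k)$, the same application of \eqref{eq:fep1} with $u=x^k$, $v=y^{k+1}$, $w^+=x^{k+1}$, $z=x^*$, the same combined Fej\'er inequality with coefficients $\frac{1-2\gamma_k-4\lambda_k}{1-2\lambda_k}$ and $\frac{1-2\theta_k}{1-2\lambda_k}$, and the same use of \eqref{eq:fsa} and Lemma~\ref{fejer} to close the argument. The only place you diverge is in identifying the cluster points: you invoke Lemma~\ref{le:ppseq}, which forces you to verify (c1) and (c2) (hence the extra work with \eqref{eq:fep2}, the triangle inequalities, and the summability of $\|x^{k+1}-y^{k+1}\|^2$), and then to argue $\|\bar x-P_B(\bar x)\|=dist(A,B)=0$ -- a fact that sits in the proof of that lemma rather than in its statement. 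The paper avoids this entirely: once $\|y^{k+1}-x^k\|\to 0$, every cluster point of $(x^k)_{k\in\mathbb{N}}$ is automatically a cluster point of $(y^k)_{k\in\mathbb{N}}$, hence lies in $B$ by closedness (and in $A$ since $(x^k)_{k\in\mathbb{N}}\subset A$), after which Lemma~\ref{fejer}(iii) applies. Your route is valid -- indeed your own intermediate result $\|x^k-y^k\|\to 0$ together with closedness of $B$ would have given the same conclusion in one line, making the appeal to Lemma~\ref{le:ppseq} (which is really designed for the empty-intersection case) an unnecessary, though harmless, detour.
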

\begin{proof}
Take any ${\bar x}\in A\cap B$. From  \eqref{eq:P_A(x^k)} we have  $y^{k+1}=P_B(x^k)$. Then,   applying   Lemma~\ref{le:projeccion} (ii)  with $C=B$, $v=x^k$,  and $z={\bar x}$,  we conclude  
\begin{equation}\label{pr:CaAlg1}
\|y^{k+1}-{\bar x}\|^2\leq \|x^k-{\bar x}\|^2-\|y^{k+1}-x^k\|^2.
\end{equation} 
Using \eqref{eq:P_B(y^k)} and applying  \eqref{eq:fep1} of Lemma \ref{pr:condi}  with $v=y^{k+1}$, $u=x^k$, $\gamma=\gamma_k$, $\theta=\theta_k$, $\lambda=\lambda_k$, $\varphi_k=\varphi_{ \gamma_k, \theta_k, \lambda_k}$, $w^+=x^{k+1}$, $z={\bar x}$,  and   $C=A$,  we obtain   
\begin{equation}\label{sequenceprop}
\|x^{k+1}-{\bar x}\|^2\leq \|y^{k+1}-{\bar x}\|^2+ \frac{2\gamma_k+2\lambda_k}{1-2\lambda_k}\|x^k-y^{k+1}\|^2- \frac{1-2\theta_k}{1-2\lambda_k} \|x^{k+1}-y^{k+1}\|^2.
\end{equation}
Therefore,  the combination of  \eqref{pr:CaAlg1} with \eqref{sequenceprop} yields  
\begin{equation} \label{eq:fipc}
\|x^{k+1}-{\bar x}\|^2\leq \|x^k-{\bar x}\|^2-  \frac{1-2\gamma_k-4\lambda_k}{1-2\lambda_k}\|x^k-y^{k+1}\|^2-  \frac{1-2\theta_k}{1-2\lambda_k} \|x^{k+1}-y^{k+1}\|^2.
\end{equation}
Hence,   \eqref{eq:fsa} and  \eqref{eq:fipc} imply that $(x^k)_{k\in\mathbb{N}}$ is F\'ejer convergent to $A\cap B\neq \varnothing$.  Thus, by Lemma~\ref{fejer}~(i), $(x^k)_{k\in\mathbb{N}}$ is bounded. Also, using Lemma~\ref{fejer}~(ii), we conclude  that $(\|x^k-{\bar x}\|)_{k\in\mathbb{N}}$ converges. Consequently,  from   \eqref{pr:CaAlg1} we obtain that $(y^k)_{k\in\mathbb{N}}$ is also bounded. 
 Since $A$ and $B$ are closed sets,  $(x^k)_{k\in\mathbb{N}}\subset  A$,  and $(y^k)_{k\in\mathbb{N}}\subset B$, all cluster cluster points of these  sequences  belong to the sets $A$ and $B$,  respectively. Now, using   \eqref{eq:fipc} together with  \eqref{eq:fsa},   we obtain  
 $$
 0< (1-\sigma)\|x^k-y^{k+1}\|^2\leq \|x^k-{\bar x}\|^2-  \|x^{k+1}-{\bar x}\|^2.
 $$
Since the  right hand side of the last inequality  converges   to zero,   $(\|y^{k+1}-x^k\|)_{k\in\mathbb{N}}$ also converges to zero. Thus all cluster points of $(x^k)_{k\in\mathbb{N}}$ are also clusters points of $(y^k)_{k\in\mathbb{N}}$, proving that there exists   ${\hat x}\in A\cap B$  a cluster point of $(x^k)_{k\in\mathbb{N}}$ and $(y^k)_{k\in\mathbb{N}}$. Therefore,  from Lemma \ref{fejer} (iii) we conclude that $(x^k)_{k\in\mathbb{N}}$  and  $(y^k)_{k\in\mathbb{N}}$ converge to a same  point in $A\cap B$.
\end{proof}
%%%%%%%%%%%%%%%%%%%%%%%%%%%%%%%%%
\subsection{The ACondG-1 method  for  two sets with  empty intersection} \label{sec:ACondG1e}
%%%%%%%%%%%%%%%%%%%%%%%%%%%%%%%%%
In the following  we assume that the sets $A$ and  $B$ have empty intersection, i.e., $A\cap B=  \varnothing$.  Since  $A$ is a compact set and the projection mapping $P_B$ is continuous we have
\begin{equation}\label{eq:bomef}
\zeta:=\sup\{ \|x-P_B(u)\|: ~x, u\in A\}<+\infty.
\end{equation}
Let us  also  {\it assume in this section  that  $(\lambda_k)_{k\in\mathbb{N}}$, $(\gamma_k)_{k\in\mathbb{N}}$ and $(\theta_k)_{k\in\mathbb{N}}$ are summable},  
\begin{equation} \label{eq:sssf}
 \sum_{k\in \mathbb{N}}\gamma_k< \infty, \qquad \quad  \sum_{k\in \mathbb{N}}\theta_k< \infty, \qquad \quad \sum_{k\in \mathbb{N}}\lambda_k< \infty.
\end{equation}
{\it  In addition, we consider      $0\leq \lambda_k<1/2$, for all $k=0, 1, \ldots$}.
\begin{theorem}
The sequences  $(x^k)_{k\in\mathbb{N}}$ and  $(y^k)_{k\in\mathbb{N}}$ converge,  respectively,  to  $x^*\in A$ and $y^*\in B$  satisfying  $\|x^*-y^*\|=dist(A,B)$. 
\end{theorem}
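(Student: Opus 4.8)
The plan is to verify conditions (c1) and (c2) of Lemma~\ref{le:ppseq} for the choice $C=A$, $D=B$, $v^k=x^k$, $w^k=y^k$, and, simultaneously, to show that $(x^k)_{k\in\mathbb{N}}$ is quasi-Fej\'er convergent to the set $E:=\{z\in A:\|z-P_B(z)\|=dist(A,B)\}$; combining these two facts will give the statement. First I note that $E\neq\varnothing$: the map $z\mapsto\|z-P_B(z)\|=dist(z,B)$ is continuous (Lemma~\ref{le:projeccion}(iii)) on the compact set $A$, hence attains its minimum, and that minimum equals $dist(A,B)$, so any minimizer lies in $E$. I fix once and for all some $\bar z\in E$, set $\bar w:=P_B(\bar z)$ and $g:=\bar z-\bar w$, so $\|g\|=dist(A,B)=:d$; since $dist(\bar w,A)\le\|\bar w-\bar z\|=d\le dist(\bar w,A)$ one also gets $P_A(\bar w)=\bar z$, i.e. $(\bar z,\bar w)$ is a nearest pair.

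Since $(x^k)_{k\in\mathbb{N}}\subset A$ and $A$ is compact, the sequence is bounded, and by \eqref{eq:bomef} the quantities $\|y^{k+1}-x^k\|$ and $\|x^{k+1}-y^{k+1}\|$ are all bounded by $\zeta$; hence \eqref{eq:fep2} (with $v=y^{k+1}$, $u=x^k$, $w^+=x^{k+1}$) shows that the inexactness $r_k:=x^{k+1}-P_A(y^{k+1})$ obeys $\|r_k\|^2\le C(\gamma_k+\theta_k+\lambda_k)$ for a constant $C$, so $\|r_k\|\to0$ by \eqref{eq:sssf}. For the quasi-Fej\'er property I apply Corollary~\ref{cr:condi} with $u=x^k$, $v=y^{k+1}=P_B(x^k)$, $w^+=x^{k+1}$ and the fixed $\bar z\in E$: the inner-product term $2\langle x^k-y^{k+1},P_B(\bar z)-y^{k+1}\rangle$ is $\le0$ by Lemma~\ref{le:projeccion}(i) (with $C=B$ and $z=P_B(\bar z)\in B$), while the remaining terms are bounded by $C'(\gamma_k+\theta_k+\lambda_k)$, which is summable by \eqref{eq:sssf}. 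Thus $(x^k)_{k\in\mathbb{N}}$ is quasi-Fej\'er convergent to $E$, so Lemma~\ref{fejer} gives boundedness and convergence of $(\|x^k-\bar z\|)_{k\in\mathbb{N}}$. This already establishes (c2), because $\|y^{k+1}-P_B(x^k)\|=0$ and $\|x^{k+1}-P_A(y^{k+1})\|=\|r_k\|\to0$.

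The crux is condition (c1), namely $\|x^{k+1}-x^k\|\to0$ and $\|y^{k+1}-y^k\|\to0$; this asymptotic regularity is the main obstacle. The second limit reduces to the first since projections are nonexpansive (a consequence of Lemma~\ref{le:projeccion}(i)), giving $\|y^{k+1}-y^k\|=\|P_B(x^k)-P_B(x^{k-1})\|\le\|x^k-x^{k-1}\|$. For the first I use the firm nonexpansiveness of $P_A$ and $P_B$, itself a direct consequence of Lemma~\ref{le:projeccion}(i), in the two-point form $\|P_Ca-P_Cb\|^2+\|(a-P_Ca)-(b-P_Cb)\|^2\le\|a-b\|^2$. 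Applying this to $P_B$ at the pair $(x^k,\bar z)$ and to $P_A$ at the pair $(y^{k+1},\bar w)$, and using $P_B(\bar z)=\bar w$, $P_A(\bar w)=\bar z$, $\bar z-\bar w=g$, I chain the two resulting inequalities to obtain, with $\tilde x^{k+1}:=P_A(y^{k+1})$,
\[
\|\tilde x^{k+1}-\bar z\|^2+\|(x^k-y^{k+1})-g\|^2+\|(\tilde x^{k+1}-y^{k+1})-g\|^2\le\|x^k-\bar z\|^2 .
\]
Absorbing the inexactness through $\|x^{k+1}-\bar z\|^2\le\|\tilde x^{k+1}-\bar z\|^2+2M\|r_k\|+\|r_k\|^2$, where $M:=\sup_k\|x^{k+1}-\bar z\|<\infty$, the sum of the two defect terms is bounded by $\|x^k-\bar z\|^2-\|x^{k+1}-\bar z\|^2+2M\|r_k\|+\|r_k\|^2$, whose right-hand side tends to $0$ because $(\|x^k-\bar z\|)_{k\in\mathbb{N}}$ converges and $\|r_k\|\to0$. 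Hence both defects vanish, so $x^k-y^{k+1}\to g$ and $\tilde x^{k+1}-y^{k+1}\to g$; together with $r_k\to0$ this yields $x^{k+1}-y^{k+1}\to g$, and therefore $x^{k+1}-x^k=(x^{k+1}-y^{k+1})-(x^k-y^{k+1})\to0$. It is precisely here that compactness and the summability \eqref{eq:sssf} are essential.

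With (c1) and (c2) in hand, Lemma~\ref{le:ppseq} applies with $C=A$, $D=B$: every cluster point $\bar x$ of $(x^k)_{k\in\mathbb{N}}$ is a fixed point of $P_AP_B$ satisfying $\|\bar x-P_B(\bar x)\|=d$, hence $\bar x\in E$, and moreover $\lim_{k}\|x^k-y^k\|=d$ with $\lim_{k}(x^k-y^k)=P_{A-B}(0)$. Since $(x^k)_{k\in\mathbb{N}}$ is quasi-Fej\'er convergent to $E$ and admits a cluster point $\bar x\in E$, Lemma~\ref{fejer}(iii) forces the whole sequence to converge, $x^k\to x^*:=\bar x\in A$. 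By continuity of $P_B$ (Lemma~\ref{le:projeccion}(iii)) we then get $y^{k+1}=P_B(x^k)\to P_B(x^*)=:y^*\in B$, and likewise $y^k\to y^*$. Finally $\|x^*-y^*\|=\|x^*-P_B(x^*)\|=d=dist(A,B)$, which is the desired conclusion.
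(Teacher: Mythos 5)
Your proof is correct, and its overall skeleton coincides with the paper's: both establish condition (c2) of Lemma~\ref{le:ppseq} via \eqref{eq:fep2}, both obtain quasi-Fej\'er convergence to $E$ from Corollary~\ref{cr:condi} together with Lemma~\ref{le:projeccion}(i) applied to the exact projection $y^{k+1}=P_B(x^k)$, and both finish with Lemma~\ref{le:ppseq}, Lemma~\ref{fejer}(iii), and continuity of $P_B$. Where you genuinely diverge is the asymptotic-regularity step (c1). The paper gets it by summing Lemma~\ref{le:projeccion}(ii) (with $z=y^k$) and \eqref{eq:fep1} of Lemma~\ref{pr:condi} (with $z=x^k$), producing a telescoping inequality in $\|x^k-y^k\|^2$ whose summable perturbation yields $\sum_{k}\bigl(\|x^{k+1}-x^k\|^2+\|y^{k+1}-y^k\|^2\bigr)<+\infty$ --- a conclusion stronger than (c1). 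You instead invoke firm nonexpansiveness of $P_B$ and $P_A$ (correctly derived from Lemma~\ref{le:projeccion}(i)) chained at a best-approximation pair $(\bar z,\bar w)$, and exploit the convergence of $\|x^k-\bar z\|$ --- which you already have from the quasi-Fej\'er property --- to force the two displacement defects to vanish; this gives the weaker but sufficient $\|x^{k+1}-x^k\|\to 0$, and reduces $\|y^{k+1}-y^k\|\to 0$ to it by nonexpansiveness. Your route buys two things: it explicitly identifies the limit of the gap vectors, $x^k-y^{k+1}\to \bar z-P_B(\bar z)$, recovering $P_{A-B}(0)$ directly rather than through Lemma~\ref{le:ppseq}, and it makes explicit the fact $E\neq\varnothing$ (with $P_A(\bar w)=\bar z$), which the paper uses tacitly when applying Corollary~\ref{cr:condi}; the paper's route buys square-summability of the increments without needing the quasi-Fej\'er property first, so its two halves are logically independent, whereas yours must establish quasi-Fej\'er convergence before (c1). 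Two trivial blemishes, easily repaired: in the absorption inequality the constant $M$ should bound $\|\tilde x^{k+1}-\bar z\|$ (or note that both it and $\|x^{k+1}-\bar z\|$ are bounded by the diameter of the compact set $A$), and, as in the paper, the summability of the perturbation terms with denominator $1-2\lambda_k$ should be justified by $\lambda_k\to 0$, which makes the factor $1/(1-2\lambda_k)$ bounded for large $k$.
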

\begin{proof}
Since $(x^k)_{k\in\mathbb{N}}\subset A$ and   $y^{k+1}=P_B(x^{k})$, by  using \eqref{eq:bomef} we have  $\|y^{k+1}-x^{k}\|\leq \zeta$ and $\|x^{k+1}-y^{k+1}\|\leq \zeta$.  Thus,  applying inequality \eqref{eq:fep2} of  Lemma \ref{pr:condi}  with $v=y^{k+1}$, $u=x^{k}$, $\gamma=\gamma_{k}$, $\theta=\theta_{k}$, $\lambda=\lambda_{k}$, $\varphi_{k}=\varphi_{ \gamma_{k}, \theta_{k}, \lambda_{k}}$, $w^+=x^{k+1}$,  and $C=A$,  we obtain  
\begin{align*}
\|x^{k+1}-P_A(y^{k+1})\|^2&\leq  \frac{2\gamma_{k}+2\lambda_{k}}{1-2\lambda_{k}}\|y^{k+1}-x^k\|^2+\frac{2\theta_{k}}{1-2\lambda_{k}}  \|x^{k+1}-y^{k+1}\|^2\\
&\leq  \frac{2\gamma_{k}+2\theta_{k}+2\lambda_{k}}{1-2\lambda_{k}} \zeta^2.
\end{align*}
Since \eqref{eq:sssf} implies  that $\lim_{k\to +\infty}\gamma_k=0$, $\lim_{k\to +\infty}\theta_k=0$, and  $\lim_{k\to +\infty}\lambda_k=0$,  the last inequality   yields   
\begin{equation}\label{gotozerotoof}
\lim_{k\to +\infty} \|x^{k+1}-P_A(y^{k+1})\|=0.
\end{equation}
On the other hand, applying  Lemma~\ref{le:projeccion} (ii)  with $C=B$,  $v=x^k$,   and   $z= y^{k}\in B$, and  taking into account $\eqref{eq:P_A(x^k)}$,  we have
\begin{equation}\label{nointeykth1}
\|y^{k+1}-y^{k}\|^2\leq \|x^k-y^{k}\|^2- \|y^{k+1}-x^{k}\|^2.
\end{equation}
Applying  inequality \eqref{eq:fep1} of  Lemma \ref{pr:condi}  with $v=y^{k+1}$, $u=x^{k}$, $\gamma=\gamma_{k}$, $\theta=\theta_{k}$, $\lambda=\lambda_{k}$, $\varphi_{k}=\varphi_{ \gamma_{k}, \theta_{k}, \lambda_{k}}$, $w^+=x^{k+1}$, $z=x^{k}$ and $C=A$, and summing the obtained inequality with   \eqref{nointeykth1},   after some algebraic manipulations, we obtain 
\begin{multline*} 
\|x^{k+1}-x^{k}\|^2+\|y^{k+1}-y^{k}\|^2 \leq \|x^{k}-y^{k}\|^2- \|x^{k+1}-y^{k+1}\|^2 \\+\frac{2\gamma_{k}+2\lambda_{k}}{1-2\lambda_{k}}\|y^{k+1}-x^{k}\|^2+ \frac{2\theta_{k}-2\gamma_{k}}{1-2\lambda_{k}} \|x^{k+1}-y^{k+1}\|^2 .
\end{multline*}
Since   $\|y^{k+1}-x^{k}\|\leq \zeta$ and $\|x^{k+1}-y^{k+1}\|\leq \zeta$,  we conclude from the above  inequality   that 
$$
\|x^{k+1}-x^{k}\|^2+\|y^{k+1}-y^{k}\|^2 \leq \|x^{k}-y^{k}\|^2- \|x^{k+1}-y^{k+1}\|^2+\frac{2\gamma_{k}+2\theta_{k}+2\lambda_{k}}{1-2\lambda_{k}} \zeta^2.
$$
Thus, using \eqref{eq:sssf} and  that $\lim_{k\to +\infty}\lambda_k=0$,    the last inequality  yields 
$$
\sum_{k\in\mathbb{N}}\left(\|x^{k+1}-x^{k}\|^2+\|y^{k+1}-y^{k}\|^2\right)\leq \|x^{0}-y^0\|^2+ \zeta^2 \sum_{k\in\mathbb{N}} \frac{2\gamma_{k}+2\theta_{k}+2\lambda_{k}}{1-2\lambda_{k}} <+ \infty, 
$$
which implies  that   both sequences $(\|x^{k+1}-x^{k}\|^2)_{k\in\mathbb{N}}$ and $(\|y^{k+1}-y^{k}\|^2)_{k\in\mathbb{N}}$ converge  to zero. Hence, considering   \eqref{eq:P_A(x^k)} and \eqref{gotozerotoof},  we can apply Lemma~\ref{le:ppseq} with  $C=A$ and $D=B$, $v^k=x^k$ and $w^k=y^k$, for all $k=0, 1, \ldots$, to conclude that   each cluster point ${\bar x}$  of $(x^k)_{k\in\mathbb{N}}$ is a fixed point of $P_AP_B$, i.e.,  ${\bar x}=P_AP_B({\bar x})$,   $\lim_{k\to \infty}\|x^k-P_B(x^k)\|=dist(A,B)$ and  $\lim_{k\to \infty}(x^k-P_B(x^k))=P_{A-B}(0)$. 

Now,  we are going to prove that the whole sequence $(x^k)_{k\in\mathbb{N}}$ converges. For that, consider the set $E=\{x\in A: \|x-P_{B}(x)\|=dist(A,B)\}$.  We already proved that all clusters point of $(x^k)_{k\in\mathbb{N}}$ belong to $E$. Applying Corollary~\ref{cr:condi} with $v=y^{k+1}$, $u=x^{k}$, $w^+=x^{k+1}$, ${\bar z}={\bar x}\in E$,  $\gamma=\gamma_{k}$, $\theta=\theta_{k}$, $\lambda=\lambda_{k}$, $\varphi_{k}=\varphi_{ \gamma_{k}, \theta_{k}, \lambda_{k}}$,  $C=A$,  and $D=B$, we obtain 
 \begin{multline*} 
\|x^{k+1}-{\bar x}\|^2\leq \|x^k-{\bar x}\|^2 + 2\langle x^k-y^{k+1}, P_B({\bar x})-y^{k+1}\rangle\\+\frac{2\gamma_k+2\lambda_k}{1-2\lambda_k}\|y^{k+1}-x^k\|^2- \frac{2\lambda_k-2\theta_k}{1-2\lambda_k} \|x^{k+1}-y^{k+1}\|^2. 
\end{multline*}
Since  $y^{k+1}=P_B(x^{k})$, applying Lemma~\ref{le:projeccion} (i) with $v=x^{k}$ and $z=P_B({\bar x})$ we have  
$$
\|x^{k+1}-{\bar x}\|^2 \leq\|x^k-{\bar x}\|^2  +\frac{2\gamma_k+2\lambda_k}{1-2\lambda_k}\|y^{k+1}-x^k\|^2- \frac{2\lambda_k-2\theta_k}{1-2\lambda_k} \|x^{k+1}-y^{k+1}\|^2. 
$$
Therefore,  due to   $\|y^{k+1}-x^k\| \leq\zeta$,  $\|x^{k+1}-y^{k+1}\|\leq \zeta$ and $0\leq \lambda_k<1/2$, it follows that 
\begin{equation} \label{eq:qlaf}
\|x^{k+1}-{\bar x}\|^2 \leq\|x^k-{\bar x}\|^2   + \zeta^2 \frac{2\gamma_k+2\theta_k+2\lambda_k}{1-2\lambda_k}. 
\end{equation}
By using \eqref{eq:sssf} and taking into account that  $\lim_{k\to +\infty}\lambda_k=0$,  we obtain 
$$
\sum_{k\in\mathbb{N}} \zeta^2 \frac{2\gamma_k+2\theta_k+2\lambda_k}{1-2\lambda_k}< \infty, 
$$
which combined with \eqref{eq:qlaf}  implies that   $(x^k)_{k\in\mathbb{N}}$ is  quasi-F\'ejer convergent  to the set $E$. Since  the sequence $(x^k)_{k\in\mathbb{N}}$  has a cluster point  belonging  to $E$, it follows  that the whole sequence converge to a point $x^*\in E$. Hence, it follows from \eqref{eq:P_A(x^k)} that  $\lim_{k \to +\infty}y^{k+1}=P_B(x^*)$. Therefore, setting  $y^*=P_B(x^*)$ and  due to $x^*=P_AP_B(x^*)$  we also  have $y^*=P_BP_A(y^*)$. By using  \cite[Theorem 2]{CheneyGoldstein1959} we obtain  that  $\|y^*-P_A(y^*))\|=dist(A,B)$, which  concludes the proof. 
\end{proof}
%%%%%%%%%%%%%%%%%%%%%%%%%%%%%%%%%%%%%%%%%%%%%%
\section{The ACondG method with  inexact  projections  onto two sets} \label{Sec:ACondG-2}
%%%%%%%%%%%%%%%%%%%%%%%%%%%%%%%%%%%%%%%%%%%%%%
In this section,  we present our  second  version of inexact  alternating  projection method  to solve Problem~\eqref{def:InexactProjProb}, by using the CondG method for compute feasible inexact projections  with respect to  both sets in consideration. This method  will be  named as {\it alternating conditional gradient-2 (ACondG-2)} method. Let us  we assume that  $A$ and  $B$ are   convex and compact sets.  The  ACondG-2 method,  is formally defined as follows:

\begin{algorithm} 
\begin{description}
\item[ Step 0.] Let   $(\lambda_k)_{k\in\mathbb{N}}$, $(\gamma_k)_{k\in\mathbb{N}}$ and $(\theta_k)_{k\in\mathbb{N}}$  be  sequences of nonnegative real numbers and the associated function $\varphi_k:=\varphi_{ \gamma_k, \theta_k, \lambda_k}$, as defined in \eqref{eq:pfphi}.   Let $x_0\in A$, $y_0\in B$.  If $x^0\in B$ or $y^0\in A$, then {\bf stop}. Otherwise, initialize $k\leftarrow 0$.
\item[ Step 1.] Using   Agorithm \ref{Alg:CondG},  compute  $\mbox{CondG$_{B}$}\,(\varphi_k, y^k,  x^k)$ and set the next iterate  $y^{k+1}$ as  
\begin{equation} \label{eq2:P_B}
 y^{k+1}:=\mbox{CondG$_{B}$}\,(\varphi_k, y^k,  x^k).  
\end{equation} 
If   $y_{k+1}\in A$, then {\bf stop}. 
\item[ Step 2.]   Using   Agorithm \ref{Alg:CondG},  compute  $\mbox{CondG$_{A}$}\,(\varphi_k, x^k,y^{k+1})$ and and set the next iterate  $x^{k+1}$  as  
$$
 x^{k+1}:=\mbox{CondG$_{A}$}\,(\varphi_k, x^k,y^{k+1}). 
$$
If $x^{k+1}\in B$, then {\bf stop}.
\item[ Step 3.]  Set $k\gets k+1$, and go to \textbf{Step~1}.
\end{description}
\caption{ACondG method with  inexact  projection onto  two sets (ACondG-2)}
\label{Alg:ACondG-2}
\end{algorithm}

As for Algorithm~\ref{Alg:ACondG-1}, if Algorithm~\ref{Alg:ACondG-2} stops, it means that a point belonging to $A\cap B$ has been found.  Therefore, hereafter  we assume  that  $(x^k)_{k\in\mathbb{N}}$ and $(y^k)_{k\in\mathbb{N}}$  generated by Algorithm~\ref{Alg:ACondG-2} are  infinity sequences.   We will proceed with the convergence analysis  of ACondG-2 method  by considering the cases where  $A\cap B\neq \varnothing $ and $A\cap B= \varnothing$.
%%%%%%%%%%%%%%%%%%%%%%%
\subsection{  The ACondG-2 method for two sets with nonempty intersection} 

Let us  assume that  $A$ and  $B$ have nonempty intersection, that is, $A\cap B\neq  \varnothing$. Additionally,  suppose that  the forcing sequences   $(\lambda_k)_{k\in\mathbb{N}}$, $(\gamma_k)_{k\in\mathbb{N}}$ and $(\theta_k)_{k\in\mathbb{N}}$ satisfy
\begin{equation} \label{eq:fsatt}
\theta_k\leq {\bar \theta}< 1/4, \qquad  2\gamma_k+4\lambda_k< {\sigma}<1, \qquad 2\gamma_k+2\theta_k+2\lambda_k<\rho<1,   
\end{equation}
for all $k=0, 1, \ldots$, where $ {\bar \theta}$, $ {\sigma}$ and $\rho$ are positive real  constants. 
\begin{theorem}
The sequences   $(x^k)_{k\in\mathbb{N}}$ and  $(y^k)_{k\in\mathbb{N}}$ converge to a point belonging  to  set $A\cap B\neq \varnothing$. 
\end{theorem}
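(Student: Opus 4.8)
The plan is to mirror the Fej\'er-type argument used for ACondG-1 in the nonempty-intersection case, the essential new feature being that now \emph{both} steps produce only inexact projections, so two applications of \eqref{eq:fep1} are required instead of one exact-projection estimate. Fix $\bar x\in A\cap B$. First I would apply \eqref{eq:fep1} of Lemma~\ref{pr:condi} to Step~1, with $C=B$, $w^+=y^{k+1}$, $v=x^k$, $u=y^k$ and $z=\bar x$, obtaining
\[
\|y^{k+1}-\bar x\|^2\le \|x^k-\bar x\|^2+\tfrac{2\gamma_k+2\lambda_k}{1-2\lambda_k}\|x^k-y^k\|^2-\tfrac{1-2\theta_k}{1-2\lambda_k}\|y^{k+1}-x^k\|^2,
\]
and then to Step~2, with $C=A$, $w^+=x^{k+1}$, $v=y^{k+1}$, $u=x^k$ and $z=\bar x$, obtaining the analogous estimate with $\|y^{k+1}-x^k\|^2$ in the error term and $\|x^{k+1}-y^{k+1}\|^2$ in the final term.

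Chaining these two inequalities (substituting the first into the second) yields the one-step relation
\[
\|x^{k+1}-\bar x\|^2\le \|x^k-\bar x\|^2+\tfrac{2\gamma_k+2\lambda_k}{1-2\lambda_k}\|x^k-y^k\|^2-\tfrac{1-2\theta_k-2\gamma_k-2\lambda_k}{1-2\lambda_k}\|y^{k+1}-x^k\|^2-\tfrac{1-2\theta_k}{1-2\lambda_k}\|x^{k+1}-y^{k+1}\|^2.
\]
The coefficient of $\|y^{k+1}-x^k\|^2$ is negative thanks to the third inequality in \eqref{eq:fsatt} ($2\gamma_k+2\theta_k+2\lambda_k<\rho<1$). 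The crucial difference from ACondG-1 is the \emph{positive} term $\tfrac{2\gamma_k+2\lambda_k}{1-2\lambda_k}\|x^k-y^k\|^2$: in ACondG-1 Step~1 is an exact projection onto $B$, so Lemma~\ref{le:projeccion}(ii) gives the clean decrease $\|y^{k+1}-\bar x\|^2\le\|x^k-\bar x\|^2-\|y^{k+1}-x^k\|^2$ with no error term and $(x^k)_{k\in\mathbb N}$ is genuinely Fej\'er. Here this positive term destroys plain Fej\'er monotonicity and must be absorbed.

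The key observation is that the positive term $\tfrac{2\gamma_k+2\lambda_k}{1-2\lambda_k}\|x^k-y^k\|^2$ at iteration $k$ is paid for by the negative term $-\tfrac{1-2\theta_{k-1}}{1-2\lambda_{k-1}}\|x^k-y^k\|^2$ produced at iteration $k-1$, since $\|x^k-y^k\|^2=\|x^{(k-1)+1}-y^{(k-1)+1}\|^2$ is exactly the last quantity in the previous step's inequality. Thus I would sum the one-step relation over $k$; conditions \eqref{eq:fsatt} are precisely what make the telescoped coefficient of each $\|x^k-y^k\|^2$, namely $\tfrac{1-2\theta_{k-1}}{1-2\lambda_{k-1}}-\tfrac{2\gamma_k+2\lambda_k}{1-2\lambda_k}$, and that of each $\|y^{k+1}-x^k\|^2$, bounded below by a positive constant. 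This gives simultaneously $\sum_k\|x^k-y^k\|^2<\infty$ and $\sum_k\|y^{k+1}-x^k\|^2<\infty$, in particular $\|x^k-y^k\|\to0$. Since the forcing parameters are bounded, $\sum_k \tfrac{2\gamma_k+2\lambda_k}{1-2\lambda_k}\|x^k-y^k\|^2<\infty$ as well, so dropping the two negative terms in the one-step relation shows $(x^k)_{k\in\mathbb N}$ is quasi-Fej\'er convergent to $A\cap B$ with summable $\epsilon_k$. By Lemma~\ref{fejer}(i)--(ii) the sequence $(x^k)_{k\in\mathbb N}$ is bounded and $\|x^k-\bar x\|$ converges.

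To finish, I would use $\|x^k-y^k\|\to0$ together with $x^k\in A$ and $y^k\in B$ (both sets closed): any cluster point $\hat x$ of $(x^k)_{k\in\mathbb N}$, say $x^{k_j}\to\hat x$, satisfies $y^{k_j}\to\hat x$ as well, whence $\hat x\in A$ and $\hat x\in B$, i.e.\ $\hat x\in A\cap B$. Since this cluster point lies in the target set, Lemma~\ref{fejer}(iii) upgrades the quasi-Fej\'er property to convergence of the whole sequence $(x^k)_{k\in\mathbb N}\to\hat x$, and $\|x^k-y^k\|\to0$ then yields $(y^k)_{k\in\mathbb N}\to\hat x$ too. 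The main obstacle is the telescoping step: one must verify that the accuracy parameter governing the projection onto $A$ at iteration $k$ dominates the error-tolerance parameter entering iteration $k+1$ (equivalently, that the mixed-index coefficient above stays uniformly positive), which is exactly the role played by the combination of the three conditions in \eqref{eq:fsatt}; everything else is routine bookkeeping.
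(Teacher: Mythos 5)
Your proposal follows essentially the same route as the paper's proof: the same two applications of \eqref{eq:fep1} of Lemma~\ref{pr:condi} with identical substitutions, chained into exactly the paper's inequality \eqref{eq:cten}, and the same ending (quasi-Fej\'er convergence to $A\cap B$, cluster points forced into $A\cap B$ by $\|x^k-y^k\|\to 0$ and closedness, then Lemma~\ref{fejer}(iii)). The only real difference is the middle bookkeeping. The paper absorbs the error term $\tfrac{2\gamma_k+2\lambda_k}{1-2\lambda_k}\|x^k-y^k\|^2$ by forming the monotone quantity $\|x^k-x^*\|^2+\tfrac12\|x^k-y^k\|^2$ (inequality \eqref{eq:fcy}), whose telescoping only yields $\sum_k\|x^k-y^{k+1}\|^2<\infty$; it then needs a third application of \eqref{eq:fep1} (with $z=x^k$) plus the triangle inequality to get $\|x^{k+1}-y^{k+1}\|\le 3\|x^k-y^{k+1}\|$ and hence $\sum_k\|x^k-y^k\|^2<\infty$. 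Your shifted-index telescoping produces $\sum_k\|x^k-y^k\|^2<\infty$ and $\sum_k\|y^{k+1}-x^k\|^2<\infty$ in one stroke and dispenses with that auxiliary step, which is a mild streamlining. One caveat: your assertion that \eqref{eq:fsatt} makes $\tfrac{1-2\theta_{k-1}}{1-2\lambda_{k-1}}-\tfrac{2\gamma_k+2\lambda_k}{1-2\lambda_k}$ uniformly positive is not literally correct: with $\theta_{k-1}$ near $1/4$ and $\lambda_{k-1}=0$ the first fraction is near $1/2$, while with $\theta_k=\lambda_k=0$ and $\gamma_k$ near $\sigma/2$ for $\sigma$ near $1$ the second is near $\sigma>1/2$, all permitted by \eqref{eq:fsatt}. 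However, this is precisely the same weakness as in the paper's own passage from \eqref{eq:cten} to \eqref{eq:fcy}, which needs $\tfrac{2\gamma_k+2\lambda_k}{1-2\lambda_k}\le\tfrac12$ and this too fails under \eqref{eq:fsatt} as written (take $\gamma_k=0.45$, $\theta_k=\lambda_k=0$, $\sigma=\rho=0.95$). Both your argument and the paper's go through under the evident intent of \eqref{eq:fsatt}, e.g.\ assuming in addition $\sigma+2\bar\theta\le 1$; so your proof is exactly as sound as the published one.
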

\begin{proof} 
Let $x^*\in A\cap B$ and $k\in\mathbb{N}$. Applying \eqref{eq:fep1} of Lemma \ref{pr:condi} with $v=x^k$, $u=y^k$, $\gamma=\gamma_k$, $\theta=\theta_k$, $\lambda=\lambda_k$, $\varphi_k=\varphi_{ \gamma_k, \theta_k, \lambda_k}$, $w^+=y^{k+1}$, $z=x^*$,   and $C=B$,  we have
\begin{equation}\label{ykclose}
\|y^{k+1}-x^*\|^2\leq \|x^k-x^*\|^2+ \frac{2\gamma_k+2\lambda_k}{1-2\lambda_k}\|x^{k}-y^k\|^2- \frac{1-2\theta_k}{1-2\lambda_k} \|y^{k+1}-x^{k}\|^2.
\end{equation}
On the other hand,  applying  \eqref{eq:fep1} of Lemma \ref{pr:condi} with $v=y^{k+1}$, $u=x^k$, $\gamma=\gamma_k$, $\theta=\theta_k$, $\lambda=\lambda_k$, $\varphi_k=\varphi_{ \gamma_k, \theta_k, \lambda_k}$, $w^+=x^{k+1}$, $z=x^*$,  and $C=A$,  we obtain 
\begin{equation}\label{xk+1}
\|x^{k+1}-x^*\|^2\leq \|y^{k+1}-x^*\|^2 +\frac{2\gamma_k+2\lambda_k}{1-2\lambda_k}\|y^{k+1}-x^k\|^2- \frac{1-2\theta_k}{1-2\lambda_k} \|x^{k+1}-y^{k+1}\|^2.
\end{equation}
Combining inequalities   \eqref{ykclose} and  \eqref{xk+1} we conclude that 
\begin{multline} \label{eq:cten}
\|x^{k+1}-x^*\|^2\leq  \|x^k-x^*\|^2-\frac{1-2\theta_k}{1-2\lambda_k}\|y^{k+1}-x^{k+1}\|^2  \\ + \frac{2\gamma_k+2\lambda_k}{1-2\lambda_k}\|y^k-x^{k}\|^2 -\frac{1-2\gamma_k-2\theta_k-2\lambda_k}{1-2\lambda_k}\|x^{k}-y^{k+1}\|^2.
\end{multline}
Thus, taking into account \eqref{eq:fsatt}, the  inequality \eqref{eq:cten} implies 
\begin{equation}\label{eq:fcy}
\|x^{k+1}-x^*\|^2+ \frac{1}{2}\|x^{k+1}-y^{k+1}\|^2 \leq \|x^k-x^*\|^2+ \frac{1}{2}\|x^k-y^{k}\|^2.
\end{equation}
In particular, \eqref{eq:fcy} implies that the sequence $(\|x^{k}-x^*\|^2+ \frac{1}{2}\|x^{k}-y^{k}\|^2)_{k\in\mathbb{N}}$ is non-increasing. Hence, it converges and, moreover,    $(x^k)_{k\in\mathbb{N}}$ is bounded. We also have from    \eqref{eq:fsatt} and  \eqref{eq:cten} that 
\begin{equation}\label{sumable}
 \|x^k-y^{k+1}\|^2\leq \frac{1}{\rho} \Big{[}  \Big{(}\|x^k-x^*\|^2+ \frac{1}{2}\|x^k-y^{k}\|^2  \Big{)} -    \Big{(}\|x^{k+1}-x^*\|^2+ \frac{1}{2}\|x^{k+1}-y^{k+1}\|^2 t \Big{)} \Big{]}. 
\end{equation}
In its turn,  applying  \eqref{eq:fep1} of  Lemma \ref{pr:condi} with $v=y^{k+1}$, $u=x^k$, $\gamma=\gamma_k$, $\theta=\theta_k$, $\lambda=\lambda_k$, $\varphi_k=\varphi_{ \gamma_k, \theta_k, \lambda_k}$, $w^+=x^{k+1}$, $z=x^k$ and $C=A$,  we have 
$$
\|x^{k+1}-x^k\|^2\leq \|y^{k+1}-x^k\|^2 +\frac{2\gamma_k+2\lambda_k}{1-2\lambda_k}\|y^{k+1}-x^k\|^2- \frac{1-2\theta_k}{1-2\lambda_k} \|x^{k+1}-y^{k+1}\|^2.
$$
Thus,  by using the two first inequalities  in  \eqref{eq:fsatt},    we have  $ \|x^{k+1}-x^k\|\leq 2\|y^{k+1}-x^k\|$, which  after apply     triangular  inequality,   yields  $\|x^{k+1}-y^{k+1}\| \leq  3 \|x^k-y^{k+1}\|$.
Hence, since  \eqref{sumable}  implies  $\sum_{k\in \mathbb{N}}\|x^k-y^{k+1}\|^2<+\infty$, we obtain  $\sum_{k\in\mathbb{N}}\|x^{k}-y^k\|^2<+\infty$.   In particular, this  inequality implies that  $(\|x^k-y^{k}\|)_{k\in\mathbb{N}}$ converges to zero.  Hence, $(x^k)_{k\in\mathbb{N}}$  and   $(y^k)_{k\in\mathbb{N}}$ has the same cluster points. Taking into account that  $(x^k)_{k\in\mathbb{N}}\subset A$  and   $(y^k)_{k\in\mathbb{N}}\subset B$,  we conclude that all cluster points of $(x^k)_{k\in\mathbb{N}}$  and $(y^k)_{k\in\mathbb{N}}$  are in $A\cap B\neq \varnothing$. Finally,  the combination of  \eqref{xk+1}   with \eqref{eq:fsatt} implies 
$$
\|x^{k+1}-x^*\|^2\leq  \|x^k-x^*\|^2  + 2\|x^{k}-y^k\|^2, 
$$
and  considering that  $\sum_{k\in\mathbb{N}}\|x^{k}-y^k\|^2<+\infty$, we conclude that $(x^k)_{k\in\mathbb{N}}$ is  quasi-F\'ejer convergence to $A\cap B\neq \varnothing$. Since all clusters point of  $(x^k)_{k\in\mathbb{N}}$  belongs to $A\cap B$, Lemma \ref{fejer}{\it (iii)} implies  that it converges   to a point in   $A\cap B$.  Therefore,  due to the  cluster points of $(x^k)_{k\in\mathbb{N}}$  and   $(y^k)_{k\in\mathbb{N}}$ are the same, the results follows and the proof is concluded. 
\end{proof} 
%%%%%%%%%%%%%%%%%%%%%%%%%%%%%%%%%
\subsection{The ACondG-2 method for  two sets with  empty intersection}
%%%%%%%%%%%%%%%%%%%%%%%%%%%%%%%%%
Now,  we assume that the sets $A$ and  $B$ have empty intersection, that is, $A\cap B=  \varnothing$.  Since  $A$ and $B$ are bounded we have
$$
\omega:=\sup\{ \|x-y\|: ~x\in A,~y\in B\}<+\infty.
$$
As in section~\ref{sec:ACondG1e},  we also assume that $(\lambda_k)_{k\in\mathbb{N}}$, $(\gamma_k)_{k\in\mathbb{N}}$ and $(\theta_k)_{k\in\mathbb{N}}$ are summable, i.e.,  satisfy  \eqref{eq:sssf}, and   that   $0\leq \lambda_k<1/2$, for all $k=0, 1, \ldots$.
\begin{theorem}
The sequences  $(x^k)_{k\in\mathbb{N}}$ and  $(y^k)_{k\in\mathbb{N}}$ converge respectively to $x^*\in A$ and $y^*\in B$ which satisfy $\|x^*-y^*\|=dist(A,B)$. 
\end{theorem}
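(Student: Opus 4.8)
The plan is to verify hypotheses (c1) and (c2) of Lemma~\ref{le:ppseq} with $C=A$, $D=B$, $v^k=x^k$ and $w^k=y^k$, and then to upgrade convergence along subsequences to convergence of the whole sequences by a quasi-Fej\'er argument. Since $A$ and $B$ are compact, every distance among the iterates is bounded by a finite constant, which for brevity I denote by $\omega$; I use this bound repeatedly to turn the forcing parameters into summable error terms via \eqref{eq:pfphi} and \eqref{eq:sssf}.

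For condition (c2) I would apply inequality~\eqref{eq:fep2} of Lemma~\ref{pr:condi} to each CondG step: to the $B$-step~\eqref{eq2:P_B} (with $v=x^k$, $u=y^k$, $w^+=y^{k+1}$, $C=B$) and to the $A$-step (with $v=y^{k+1}$, $u=x^k$, $w^+=x^{k+1}$, $C=A$). Bounding the right-hand sides by $\omega$ gives
\[
\|y^{k+1}-P_B(x^k)\|^2\le \frac{2\gamma_k+2\theta_k+2\lambda_k}{1-2\lambda_k}\,\omega^2,\qquad
\|x^{k+1}-P_A(y^{k+1})\|^2\le \frac{2\gamma_k+2\theta_k+2\lambda_k}{1-2\lambda_k}\,\omega^2,
\]
and since \eqref{eq:sssf} forces $\gamma_k,\theta_k,\lambda_k\to0$, both right-hand sides vanish, which is (c2). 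For condition (c1) I would apply \eqref{eq:fep1} to the $B$-step with $z=y^k$ and to the $A$-step with $z=x^k$, add the two inequalities, and absorb the coefficient mismatches into $\omega$-bounded summable terms; this yields an estimate of the form
\[
\|x^{k+1}-x^k\|^2+\|y^{k+1}-y^k\|^2\le \|x^k-y^k\|^2-\|x^{k+1}-y^{k+1}\|^2+c_k\,\omega^2,
\]
with $c_k$ summable by \eqref{eq:sssf}. Summing over $k$ telescopes the middle terms and gives $\sum_k(\|x^{k+1}-x^k\|^2+\|y^{k+1}-y^k\|^2)<+\infty$, so both differences tend to zero, which is (c1). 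With (c1) and (c2) established, Lemma~\ref{le:ppseq} shows every cluster point $\bar x$ of $(x^k)_{k\in\mathbb{N}}$ satisfies $\bar x=P_AP_B(\bar x)$ and $\|\bar x-P_B(\bar x)\|=dist(A,B)$, i.e.\ lies in $E:=\{x\in A:\|x-P_B(x)\|=dist(A,B)\}$.

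The final step is to promote this to convergence of the entire sequence, and this is where I expect the main obstacle. Fixing $\bar x\in E$ and applying Corollary~\ref{cr:condi} to the $A$-step (with $v=y^{k+1}$, $u=x^k$, $w^+=x^{k+1}$, $\bar z=\bar x$, $C=A$, $D=B$) produces
\[
\|x^{k+1}-\bar x\|^2\le \|x^k-\bar x\|^2+2\langle x^k-y^{k+1},\,P_B(\bar x)-y^{k+1}\rangle+c_k\,\omega^2.
\]
In the ACondG-1 analysis the cross term was nonpositive because $y^{k+1}=P_B(x^k)$ exactly, so Lemma~\ref{le:projeccion}~(i) applied directly; here $y^{k+1}$ is only a feasible inexact projection onto $B$ and that argument is unavailable. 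The key observation to overcome this is that the inexact projection inequality~\eqref{inexactset} for the $B$-step, evaluated at $z=P_B(\bar x)\in B$, gives $\langle x^k-y^{k+1},P_B(\bar x)-y^{k+1}\rangle\le \varphi_k(y^k,x^k,y^{k+1})$, and by \eqref{eq:pfphi} together with the bound $\omega$ this is at most $(\gamma_k+\theta_k+\lambda_k)\omega^2$, hence summable. Therefore $\|x^{k+1}-\bar x\|^2\le \|x^k-\bar x\|^2+\epsilon_k$ with $\sum_k\epsilon_k<+\infty$, so $(x^k)_{k\in\mathbb{N}}$ is quasi-Fej\'er convergent to $E$. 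Since $(x^k)_{k\in\mathbb{N}}$ has a cluster point in $E$, Lemma~\ref{fejer}~(iii) gives convergence of the whole sequence to some $x^*\in E$. Finally, continuity of $P_B$ (Lemma~\ref{le:projeccion}~(iii)) with $\|y^{k+1}-P_B(x^k)\|\to0$ yields $y^{k+1}\to P_B(x^*)$, and $\|y^{k+1}-y^k\|\to0$ then gives $y^k\to y^*:=P_B(x^*)\in B$; since $x^*\in E$, this means exactly $\|x^*-y^*\|=dist(A,B)$, completing the proof.
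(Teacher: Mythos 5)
Your proposal is correct and follows essentially the same route as the paper's proof: establishing (c1) and (c2) via Lemma~\ref{pr:condi} to invoke Lemma~\ref{le:ppseq}, and then handling the cross term in Corollary~\ref{cr:condi} by applying the inexact projection inequality~\eqref{inexactset} at $z=P_B(\bar x)$ together with \eqref{eq:pfphi} --- which is exactly the paper's key step for the quasi-Fej\'er argument. The only (harmless) deviations are cosmetic: you bound $\lambda_k\|y^{k+1}-y^k\|^2$ by $\lambda_k\omega^2$ where the paper keeps $\|y^{k+1}-y^k\|^2$ and uses its summability, and you conclude $\|x^*-y^*\|=dist(A,B)$ directly from $x^*\in E$ rather than re-invoking the Cheney--Goldstein theorem.
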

\begin{proof}
Let  $x\in A$ and $y\in B$. Applying \eqref{eq:fep2} of Lemma \ref{pr:condi} with $v=x^k$, $u=y^k$, $w^+=y^{k+1}$, $\gamma=\gamma_k$, $\theta=\theta_k$, $\lambda=\lambda_k$, $\varphi_k=\varphi_{ \gamma_k, \theta_k, \lambda_k}$,   and $C=B$, we have
$$
\|y^{k+1}-P_B(x^k)\|^2\leq  \frac{2\gamma_k+2\lambda_k}{1-2\lambda_k}\|x^{k}-y^k\|^2+\frac{2\theta_k}{1-2\lambda_k} \|y^{k+1}-x^{k}\|^2.
$$
Considering that  $(x^k)_{k\in\mathbb{N}}\subset A$ and  $(y^k)_{k\in\mathbb{N}}\subset B$, we have  $\|x^{k}-y^k\|\leq \omega$ and $\|y^{k+1}-x^{k}\|\leq \omega$. Moreover,  \eqref{eq:sssf} implies that   $\lim_{k\to +\infty}\gamma_k=0$, $\lim_{k\to +\infty}\theta_k=0$, and  $\lim_{k\to +\infty}\lambda_k=0$. Thus,   from the above inequality, we obtain  
\begin{equation}\label{gotozero}
\lim_{k\to +\infty} \|y^{k+1}-P_B(x^k)\|^2=0.
\end{equation}
Applying again \eqref{eq:fep2} of Lemma \ref{pr:condi}  with $v=y^{k+1}$, $u=x^k$, $w^+=x^{k+1}$, $\gamma=\gamma_k$, $\theta=\theta_k$, $\lambda=\lambda_k$, $\varphi_k=\varphi_{ \gamma_k, \theta_k, \lambda_k}$ and $C=A$,  we can also    conclude  that  
\begin{equation}\label{gotozerotoo}
\lim_{k\to +\infty} \|x^{k+1}-P_A(y^{k+1})\|^2=0.
\end{equation}
On the other hand,  applying \eqref{eq:fep1} of Lemma \ref{pr:condi} with $v=x^k$, $u=y^k$, $w^+=y^{k+1}$ $z=y^k$, $\gamma=\gamma_k$, $\theta=\theta_k$, $\lambda=\lambda_k$, $\varphi_k=\varphi_{ \gamma_k, \theta_k, \lambda_k}$,   and $C=B$,   it follows that 
$$
\|y^{k+1}-y^k\|^2\leq \|x^k-y^k\|^2+ \frac{2\gamma_k+2\lambda_k}{1-2\lambda_k}\|x^{k}-y^k\|^2- \frac{1-2\theta_k}{1-2\lambda_k} \|y^{k+1}-x^{k}\|^2.
$$
Now, applying  \eqref{eq:fep1} of Lemma \ref{pr:condi}  with $v=y^{k+1}$, $u=x^k$, $w^+=x^{k+1}$, $z=x^k$, $\gamma=\gamma_k$, $\theta=\theta_k$, $\lambda=\lambda_k$, $\varphi_k=\varphi_{ \gamma_k, \theta_k, \lambda_k}$,  and $C=A$, we obtain 
$$
\|x^{k+1}-x^k\|^2\leq \|y^{k+1}-x^k\|^2 +\frac{2\gamma_k+2\lambda_k}{1-2\lambda_k}\|y^{k+1}-x^k\|^2- \frac{1-2\theta_k}{1-2\lambda_k} \|x^{k+1}-y^{k+1}\|^2.
$$
Summing the above   two previous  inequalities we conclude 
\begin{multline*}
\|x^{k+1}-x^k\|^2+\|y^{k+1}-y^k\|^2 \leq  \frac{1+2\gamma_k}{1-2\lambda_k}\|x^{k}-y^k\|^2\\ -\frac{1-2\theta_k}{1-2\lambda_k}\|x^{k+1}-y^{k+1}\|^2+ \frac{2\theta_k+2\gamma_k}{1-2\lambda_k} \|y^{k+1}-x^{k}\|^2.
\end{multline*}
Thus, considering that   $\|x^{k+1}-y^{k+1}\|\leq \omega$,   $\|x^{k}-y^k\|\leq \omega$,  and   $\|y^{k+1}-x^{k}\|\leq \omega$ and  after some algebraic manipulations, we have 
$$
\|x^{k+1}-x^k\|^2+\|y^{k+1}-y^k\|^2 \leq  \|x^{k}-y^k\|^2 -\|x^{k+1}-y^{k+1}\|^2+\frac{4\gamma_k+4\theta_k+2\lambda_k}{1-2\lambda_k}\omega^2.
$$
Consequently, using \eqref{eq:sssf} and  that $\lim_{k\to +\infty}\lambda_k=0$,  we obtain
\begin{multline}\label{sumable2}
\sum_{k\in\mathbb{N}}\left(\|x^{k+1}-x^k\|^2+\|y^{k+1}-y^k\|^2\right)\leq \|x^{0}-y^0\|^2+ \omega^2 \sum_{k\in\mathbb{N}} \frac{4\gamma_k+4\theta+2\lambda_k}{1-2\lambda_k} <+ \infty,  
\end{multline}
which implies  that  $(\|x^{k+1}-x^{k}\|^2)_{k\in\mathbb{N}}$ and $(\|y^{k+1}-y^{k}\|^2)_{k\in\mathbb{N}}$ converge  to zero. Hence, considering  \eqref{gotozero} and  \eqref{gotozerotoo},  we can apply Lemma~\ref{le:ppseq} with  $C=A$ and $D=B$, $v^k=x^k$ and $w^k=y^k$, for all $k=0, 1, \ldots$, to conclude that   each cluster point ${\bar x}$  of $(x^k)_{k\in\mathbb{N}}$ is a fixed point of $P_AP_B$, i.e.,  ${\bar x}=P_AP_B({\bar x})$,   $\lim_{k\to \infty}\|x^k-P_B(x^k)\|=dist(A,B)$ and  $\lim_{k\to \infty}(x^k-P_B(x^k))=P_{A-B}(0)$.

 Now,  we are going to prove that the whole sequence $(x^k)_{k\in\mathbb{N}}$ converges. For that, consider the set $E=\{x\in A: \|x-P_{B}(x)\|=dist(A,B)\}$.  We already proved that all clusters point of $(x^k)_{k\in\mathbb{N}}$ belong to $E$. Take ${\bar x}\in E$.   Applying Corollary~\ref{cr:condi}  with $v=y^{k+1}$, $u=x^k$, $w^+=x^{k+1}$, ${\bar z}={\bar x}$, $\gamma=\gamma_k$, $\theta=\theta_k$, $\lambda=\lambda_k$, $\varphi_k=\varphi_{ \gamma_k, \theta_k, \lambda_k}$, $C=A$ and $D=B$, we obtain 
 \begin{multline*}  \label{eq:stsi}
\|x^{k+1}-{\bar x}\|^2 \leq\|x^k-{\bar x}\|^2 + 2\langle x^k-y^{k+1}, P_B({\bar x})-y^{k+1}\rangle \\+\frac{2\gamma_k +2\theta_k}{1-2\lambda_k}\|y^{k+1}-x^k\|^2- \frac{2\lambda_k-2\theta_k}{1-2\lambda_k} \|x^{k+1}-y^{k+1}\|^2. 
\end{multline*}
Now,   by using \eqref{eq2:P_B} we have   $y^{k+1}=\mbox{CondG$_{B}$}\,(\varphi_k, y^k,  x^k)$. Hence, it follows from \eqref{inexactset} that $\langle x^k-y^{k+1}, P_B(\bar{x})-y^{k+1}\rangle\leq \varphi_{ \gamma_k, \theta_k, \lambda_k}(y^k, x^k, y^{k+1})$. Then, from \eqref{eq:pfphi} we have 
$$
\langle x^k-y^{k+1}, P_B(\bar{x})-y^{k+1}\rangle\leq \gamma_k \|x^k-y^k\|^2 +\theta_k  \|y^{k+1}-x^k\|^2 +   \lambda_k \|y^{k+1}-y^k\|^2. 
$$
Therefore,  due to   $\|x^{k}-y^{k}\|\leq \omega$,  $\|y^{k+1}-x^{k}\|\leq \omega$, $ \|x^{k+1}-y^{k+1}\|\leq \omega$ and $0\leq \lambda_k<1/2$, it follows from the last inequality and \eqref{eq:stsi} that 
\begin{equation} \label{eq:qla}
\|x^{k+1}-{\bar x}\|^2 \leq   \|x^k-{\bar x}\|^2 + \omega^2\left(2\gamma_k+2\theta_k +\frac{2\gamma_k+4\theta_k}{1-2\lambda_k} \right)+\|y^{k+1}-y^k\|^2. 
\end{equation}
By using \eqref{eq:sssf}, \eqref{sumable2} and that  $\lim_{k\to +\infty}\lambda_k=0$,  we obtain 
$$
\sum_{k\in\mathbb{N}}\left[\omega^2\left(2\gamma_k+2\theta_k +\frac{2\gamma_k+4\theta_k}{1-2\lambda_k} \right)+\|y^{k+1}-y^k\|^2\right]< \infty, 
$$
which combined with \eqref{eq:qla}  implies that   $(x^k)_{k\in\mathbb{N}}$ is  quasi-F\'ejer convergent  to  $E$. Since  the sequence $(x^k)_{k\in\mathbb{N}}$  has a cluster point  belonging  to $E$, it follows  that the whole sequence converges to a point $x^*\in E$.  Finally, we also know that $(x^k-y^k)_{k\in\mathbb{N}}$ converges. Considering that  $y^k=x^k+(y^k-x^k)\in B$, for all $k=0, 1, \ldots$, we conclude that $(y^k)_{k\in\mathbb{N}}$ also converges to a point $y^*\in B$.  Hence, it follows from \eqref{gotozero} that  $\lim_{k \to +\infty}y^{k}=P_B(x^*)$. Therefore,  $y^*=P_B(x^*)$ and  due to $x^*=P_AP_B(x^*)$  we also  have $y^*=P_BP_A(y^*)$  and,  by using  \cite[Theorem 2]{CheneyGoldstein1959},  we obtain  $\|y^*-P_A(y^*))\|=dist(A,B)$, which  concludes the proof. 
\end{proof}

\section{Numerical examples} \label{Sec:NumExp}

The purpose of this section is  illustrate the practical behavior and demonstrate the potential advantages of the ACondG-1 and ACondG-2 algorithms over their exact counterparts. The exact schemes correspond to Algorithm~\ref{Alg:ACondG-1} and Algorithm~\ref{Alg:ACondG-2} where the projections are calculated exactly. More specifically, $y^{k+1}:=P_B(x^k)$ in Step~1 of Algorithm~\ref{Alg:ACondG-2} and $x^{k+1}:=P_A(y^{k+1})$ in Step~2 of Algorithms~\ref{Alg:ACondG-1} and \ref{Alg:ACondG-2}. The exact projection of a vector $v$ onto a set $C$ is computed by solving the convex quadratic problem
\begin{equation}\label{exactproj}
\min_{z \in  C} \frac{1}{2}\|z-v\|^2.
\end{equation}
For future reference, we will call the exact schemes by ExactAlg\ref{Alg:ACondG-1} and ExactAlg\ref{Alg:ACondG-2} corresponding to ACondG-1 and ACondG-2, respectively.
All codes were implemented in Fortran~90 and are freely available at \url{https://orizon.ime.ufg.br/}.

 In our implementations, sets $A$ and $B$ are described in the general form
$$A:=\left\{z\in\mathbb{R}^n \colon  h_A(z)=0,~  g_A(z)\leq 0\right\},  \quad \quad B:=\left\{z\in\mathbb{R}^n \colon  h_B(z)=0,~  g_B(z)\leq 0\right\},$$
where $h_A\colon\mathbb{R}^n\to\mathbb{R}^{m_A}$, $g_A\colon\mathbb{R}^n\to\mathbb{R}^{p_A}$, $h_B\colon\mathbb{R}^n\to\mathbb{R}^{m_B}$, and $g_B\colon\mathbb{R}^n\to\mathbb{R}^{p_B}$ are continuously differentiable functions. The feasibility violations at a given point $z\in\mathbb{R}^n$ with respect to sets $A$ and $B$ are measured, respectively, by
$$c_{A}(z):=\max\left\{\|h_A(z)\|_{\infty},\|V_A(z)\|_\infty \right\} \quad \mbox{and} \quad c_{B}(z):=\max\left\{\|h_B(z)\|_{\infty},\|V_B(z)\|_\infty \right\},$$
where
$$[V_{A}(z)]_i = \max\{0,[g_A(z)]_i\},   \quad \quad [V_B(z)]_j = \max\{0,[g_B(z)]_j\},$$
 $i=1,\ldots,p_A,$ and $ j=1,\ldots,p_B$.  The algorithms are successfully stopped at iteration $k$, declaring that a feasible point was found, if
$$c_B(x^{k+1})\leq \varepsilon_{feas} \quad \mbox{or} \quad c_A(y^{k+1})\leq \varepsilon_{feas},$$
where $\varepsilon_{feas}>0$ is an algorithmic parameter.
We also consider a stopping criterion related to lack of progress: the algorithms terminate if, for two consecutive iterations, it holds that
$$\|x^{k+1}-x^k\|_{\infty}\leq \varepsilon_{lack} \quad \mbox{and} \quad \|y^{k+1}-y^k\|_{\infty}\leq \varepsilon_{lack},$$
where $\varepsilon_{lack}>0$ is also an algorithmic parameter. Note that the latter criterion should be satisfied if the intersection between sets A and B is empty.

In the CondG scheme given by Algorithm~\ref{Alg:CondG}, for computing the optimal solution $z_{\ell}$ at Step~1, we used the software Algencan~\cite{algencan}, an augmented Lagrangian code for general nonlinear optimization programming. Algencan was also used to solve \eqref{exactproj} in the exact versions of the algorithms. 

Function $\varphi_{ \gamma, \theta, \lambda}$ related to the degree of inexactness of the projections was set to be equal to the right hand side of \eqref{eq:pfphi}. The forcing sequences $(\gamma_k)_{k\in\mathbb{N}}$, $(\theta_k)_{k\in\mathbb{N}}$, and $(\lambda_k)_{k\in\mathbb{N}}$ are defined in an adaptive manner. We first choose $\gamma_0$, $\theta_0$, and $\lambda_0$ satisfying either condition \eqref{eq:fsa} or \eqref{eq:fsatt} for ACondG-1 and ACondG-2 algorithms, respectively. For the subsequent iterations if, between two consecutive iterations, enough progress is observed in terms of feasibility with respect to the sets $A$ or $B$, the parameters are not updated. Otherwise, the parameters are decreased by a fixed factor. 
This means that when a lack of progress is verified, the forcing parameters are decreased requiring more accurate projections. Formally, for $k\geq 1$, we set
$$(\gamma_k,\theta_k,\lambda_k) := \left\{\hspace{-0.2cm}\begin{array}{rl}
                (\gamma_{k-1},\theta_{k-1},\lambda_{k-1}), & \hspace{-0.2cm} \mbox{if } c_B(x^k) \leq \tau c_B(x^{k-1}) \mbox{ or } c_A(y^k) \leq \tau c_A(y^{k-1}), \\
                \delta (\gamma_{k-1},\theta_{k-1},\lambda_{k-1}), & \hspace{-0.2cm} \mbox{otherwise,} \\
               \end{array}\right.$$
where $\tau,\delta \in (0,1)$ are algorithmic parameters. Since $(\gamma_k)_{k\in\mathbb{N}}$, $(\theta_k)_{k\in\mathbb{N}}$, and $(\lambda_k)_{k\in\mathbb{N}}$ are non-increasing sequences, either condition \eqref{eq:fsa} or \eqref{eq:fsatt}, according to the chosen method, holds for all $k\geq 0$. Moreover, observe that if $A\cap B=  \varnothing$, then there exists $k_0\in\N$ such that $(\gamma_k,\theta_k,\lambda_k) = \delta (\gamma_{k-1},\theta_{k-1},\lambda_{k-1})$ for all $k>k_0$. Thus, $(\gamma_k,\theta_k,\lambda_k) = \delta^{k-k_0} (\gamma_{k_0},\theta_{k_0},\lambda_{k_0})$ for all $k>k_0$. As consequence, $(\lambda_k)_{k\in\mathbb{N}}$, $(\gamma_k)_{k\in\mathbb{N}}$ and $(\theta_k)_{k\in\mathbb{N}}$ are summable, because $\delta \in (0,1)$.

In our tests we set $\varepsilon_{feas}=\varepsilon_{lack}=10^{-8}$, $\gamma_0=0.1-\varepsilon_{feas}$, $\theta_0=\lambda_0=0.2-\varepsilon_{feas}$, $\tau=0.9$, and $\delta=0.1$.
%%%%%%%%%%%%%%%%%%%%%%%%%%%%%%%%%%%
\subsection{ACondG-1 algorithm} \label{sec:numericalACondG-1}
In this subsection we consider the problem of finding a point in the intersection of a region delimited by an ellipse and a half-plane. Given $z_0^A\in\R^2$, $\theta\in[-\pi,\pi]$, $a, b>0$, and defining 
\begin{equation}\label{DR}
D(a,b):=\begin{bmatrix}
  1/a^2       & 0\\
0      &  1/b^2
\end{bmatrix} \quad \mbox{and} \quad R(\theta):=\begin{bmatrix}
  \cos \theta       & \sin \theta \\
-\sin \theta       &  \cos \theta
\end{bmatrix},
\end{equation}
we take
\begin{equation}\label{setA-1}
A=\left\{z\in\mathbb{R}^2 \colon \ds \left\langle D(a,b)R(\theta) (z-z_0^A), ~R(\theta) (z-z_0^A)\right\rangle -1  \leq 0\right\}.
\end{equation}
Let  $\beta\in\R$ a parameter and define 
\begin{equation}\label{setB-1}
B=\left\{z\in\mathbb{R}^2 \colon -[z]_1 + \beta  \leq 0\right\}.
\end{equation}
Clearly, there exists an explicit expression for the projection onto $B$.  It is worth mentioning that ellipses satisfy the assumptions  of Theorem~\ref{eq:rccgm2}; see \cite[Lemma 2]{GarberHazan2015}. In our tests, we set $z_0^A=[0,0]^T$, $\theta = -\pi/4$, $a=2$, and $b=1/5$ in \eqref{setA-1} for defining $A$, and considered different values of $\beta$ in \eqref{setB-1} for $B$. Parameter $\beta$ allows to control the existence of points in the intersection of sets $A$ and $B$. 

Table~\ref{tab:ACondG-1} shows the performance of ACondG-1 and ExactAlg\ref{Alg:ACondG-1} algorithms on eight instances of the considered problem, while Figure~\ref{fig:ACondG-1} illustrates some ``solutions''. The initial point $x^0$ of each instance was taken to be the center $z_0^A$. In the table, the first column contains the considered values of $\beta$ and ``$A\cap B$'' informs whether the intersection of sets $A$ and $B$ is empty or not. Observe that in the first four instances there are points at the intersection of $A$ and $B$, while in the last four the intersection is empty. For each algorithm, ``SC'' informs the satisfied stopping criterion where ``C'' denotes {\it convergence} meaning that the algorithm found a point in $A\cap B$ and ``L'' means that the algorithms stopped due to lack of progress, ``it'' is the number of iterations, and ``$\min\{c_B(x^*),c_A(y^*)\}$'' is the smallest feasibility violation with respect to sets $A$ and $B$ at the final iterates.

\begin{table}[h!]
 {\footnotesize
\centering
\begin{tabular}{cc|ccc|ccc|}  \hhline{~~|*6{-}|}
\multicolumn{2}{  c|  }{}     &        \multicolumn{3}{c|}{\cellcolor[gray]{.9} ACondG-1} &  \multicolumn{3}{c|}{\cellcolor[gray]{.9} ExactAlg\ref{Alg:ACondG-1}} \\    \hline \rowcolor[gray]{.9}
 \multicolumn{1}{ |c  }{$\beta$} &  $A\cap B$  &  SC & it & $\min\{c_B(x^*),c_A(y^*)\}$ & SI & it & $\min\{c_B(x^*),c_A(y^*)\}$\\   \hline
 \multicolumn{1}{ |c  }{1.30}    &                                        & C & 5   & 0.00D+00 & L & 46  & 1.47D-08 \\
 \multicolumn{1}{ |c  }{1.35}    &                                        & C & 20  & 0.00D+00 & L & 53  & 1.44D-08 \\
 \multicolumn{1}{ |c  }{1.40}    &                                        & C & 29  & 0.00D+00 & L & 78  & 2.11D-08 \\
 \multicolumn{1}{ |c  }{1.42}    &  \multirowcell{-4}{$\neq \varnothing$} & C & 120 & 0.00D+00 & L & 348 & 5.67D-08 \\ \hline
 \multicolumn{1}{ |c  }{1.43}    &                                        & L & 45  & 8.73D-03 & L & 110 & 8.73D-03 \\
 \multicolumn{1}{ |c  }{1.45}    &                                        & L & 24  & 2.87D-02 & L & 49  & 2.87D-02 \\
 \multicolumn{1}{ |c  }{1.50}    &                                        & L & 19  & 7.87D-02 & L & 28  & 7.87D-02 \\
 \multicolumn{1}{ |c  }{1.60}    &  \multirowcell{-4}{$= \varnothing$}    & L & 9   & 1.79D-01 & L & 19  & 1.79D-01 \\ \hline
\end{tabular}
\caption{Performance of ACondG-1 and ExactAlg\ref{Alg:ACondG-1} algorithms on eight instances of the problem of finding a point in $A\cap B$, where the sets are given as $\eqref{setA-1}$ and $\eqref{setB-1}$.}
\label{tab:ACondG-1}}
\end{table}

\begin{figure}[h!]
\begin{minipage}[b]{0.49\linewidth}
\begin{center}
(a) ACondG-1 ($\beta=1.30$)
\end{center}\vspace{-0.80cm}
\begin{figure}[H]
	\centering
		
		\includegraphics[scale=0.50]{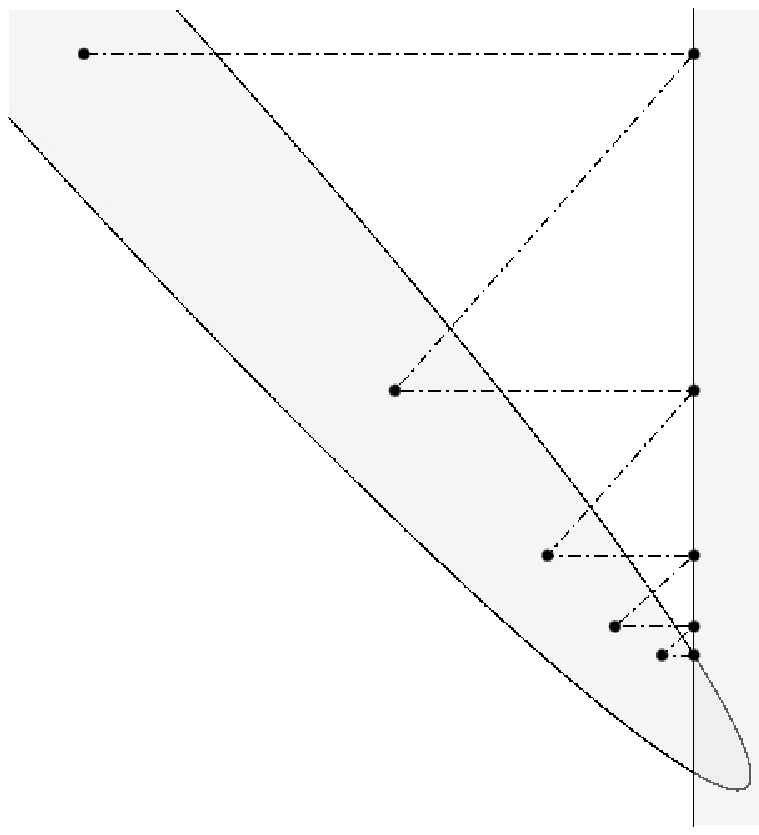}\\
		%(a) ACondG-1 ($\beta=1.30$) 
\end{figure}

\end{minipage} \hfill
\begin{minipage}[b]{0.49\linewidth}
\begin{center}
(b) ExactAlg\ref{Alg:ACondG-1} ($\beta=1.30$)
\end{center}\vspace{-0.80cm}
\begin{figure}[H]
	\centering
		\includegraphics[scale=0.50]{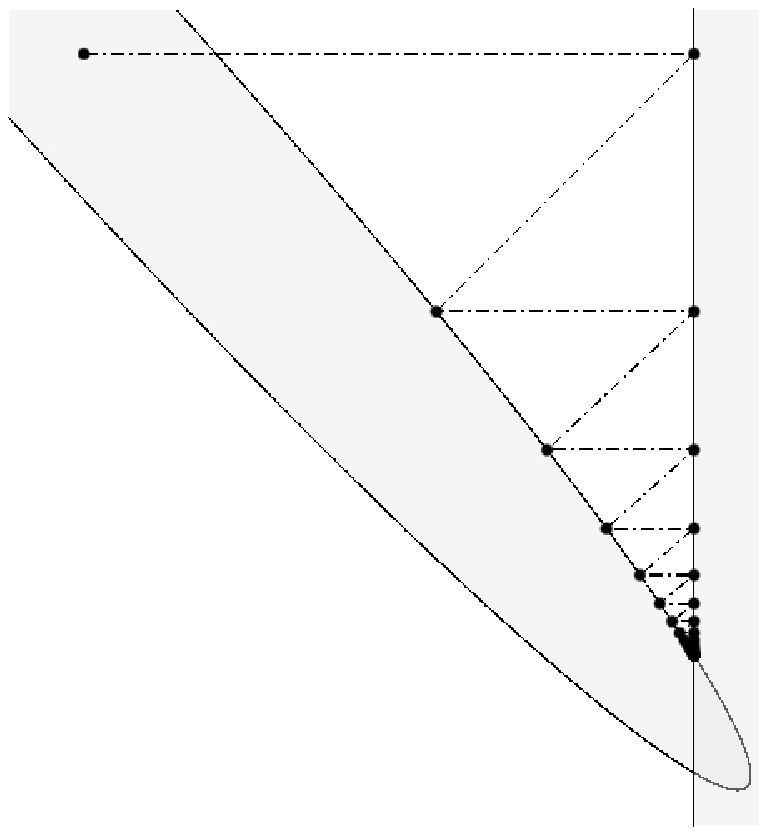}\\
		%(b) ExactAlg\ref{Alg:ACondG-1}
\end{figure}
\end{minipage}\hfill

\vspace{0.3cm}

\begin{minipage}[b]{0.49\linewidth}
\begin{center}
(c) ACondG-1 ($\beta=1.50$)
\end{center}\vspace{-0.80cm}
\begin{figure}[H]
	\centering
		\includegraphics[scale=0.50]{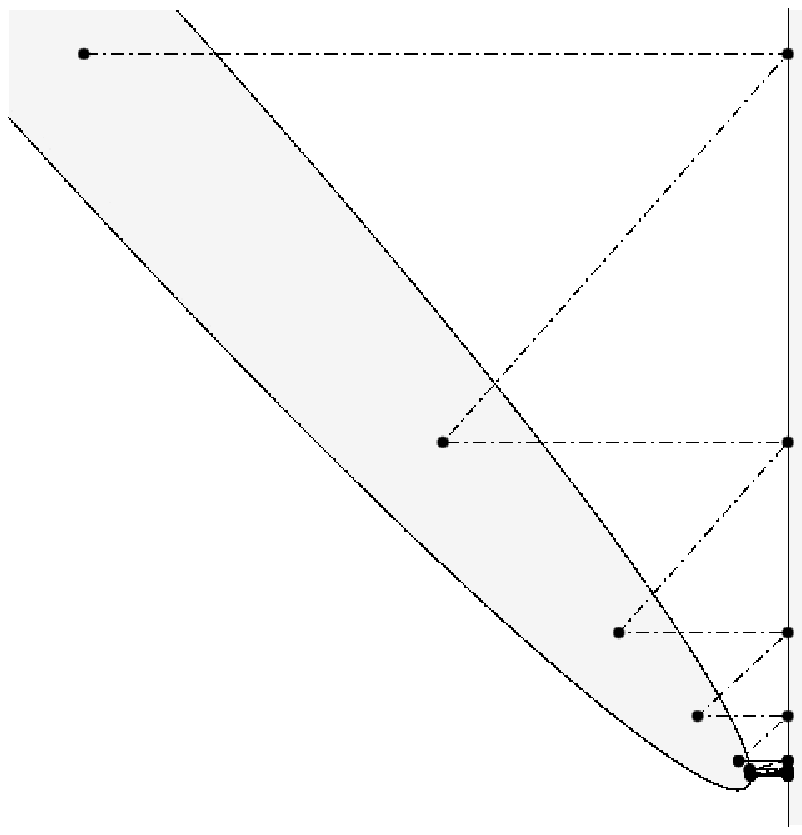}\\
		%(c)
\end{figure}

\end{minipage} \hfill
\begin{minipage}[b]{0.49\linewidth}
\begin{center}
(d) ExactAlg\ref{Alg:ACondG-1} ($\beta=1.50$)
\end{center}\vspace{-0.80cm}
\begin{figure}[H]
	\centering
		\includegraphics[scale=0.50]{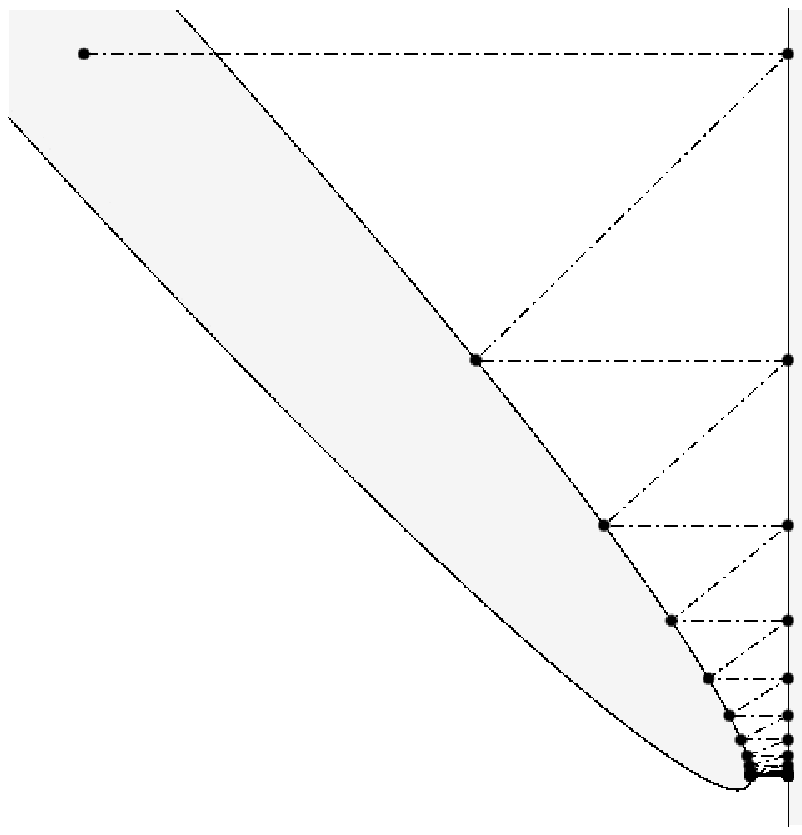}\\
		%(d)
\end{figure}
\end{minipage}\hfill
\caption{Behavior of ACondG-1 and ExactAlg\ref{Alg:ACondG-1} algorithms on the problem of finding a point in $A\cap B$, where the sets are given as $\eqref{setA-1}$ and $\eqref{setB-1}$ with $\beta=1.30$ and $\beta=1.50$.}
\label{fig:ACondG-1}
\end{figure}

In the first four instances, ACondG-1 algorithm converged to a solution. In fact, in these cases, we emphasize that a feasible point has been found, {\it not just} an infeasible point satisfying the prescribed feasibility tolerance. On the other hand, ExactAlg\ref{Alg:ACondG-1} failed to achieve the required feasibility tolerance $\varepsilon_{feas}=10^{-8}$ in all instances, stopping for lack of progress.
For ExactAlg\ref{Alg:ACondG-1}, we point out that the feasibility violation measure $\min\{c_B(x^{k+1}),c_A(y^{k+1})\}$ arrived $\mathcal{O}(10^{-8})$ after  39, 45, 69, and 331 iterations for $\beta = 1.30$, $1.35$, $1.40$, and $1.42$, respectively. This mean that ACondG-1 used $87.2\%$, $55.6\%$, $58.0\%$, and $63,4\%$ fewer iterations for finding a feasible point than ExactAlg\ref{Alg:ACondG-1} for achieve $\mathcal{O}(10^{-8})$ in the  feasibility violation measure. This seems surprising at first, because in most applications, an inexact algorithm has a low cost per iteration compared to its exact version, although the overall number of iterations tends to increase.
Figures~\ref{fig:ACondG-1}(a) and (b) show the behavior of ACondG-1 and ExactAlg\ref{Alg:ACondG-1} algorithms for $\beta=1.30$. As can be seen, the iterates of ExactAlg\ref{Alg:ACondG-1} are always at the boundary of set $A$, getting stuck at infeasible points {\it near} the intersection of the sets. In contrast, ACondG-1 goes into the interior of set $A$ approaching the intersection faster and reaching a feasible point.

In the four infeasible instances, as expected, both algorithms stopped for lack of progress reaching the same feasibility violation measure. However, as shown in Table~\ref{tab:ACondG-1}, ACondG-1 required, on average, $48.7\%$ fewer iterations than ExactAlg\ref{Alg:ACondG-1} to stop. Figures~\ref{fig:ACondG-1}(c) and (d) show the behavior of ACondG-1 and ExactAlg\ref{Alg:ACondG-1} algorithms for $\beta=1.50$. Observe that, as $k$ grows, the forcing parameters go to zero and the iterations of ACondG-1 become exact ones. This can be seen by noting that the iterates $x^k$, for large $k$, belong to the boundary of the ellipse.

%%%%%%%%%%%%%%%%%%%%%%%%%%
\subsection{ACondG-2 algorithm}

Now we consider the problem of finding a point in the intersection of regions delimited by two ellipses. Let $A$ as in section~\ref{sec:numericalACondG-1}. For defining $B$, let $z_0^B\in\R^2$ and set
\begin{equation}\label{setB-2}
B=\left\{z\in\mathbb{R}^2 \colon \ds \left\langle D(c,d)R(\vartheta) (z-z_0^B), ~R(\vartheta) (z-z_0^B)\right\rangle -1  \leq 0\right\},
\end{equation}
where $D(\cdot,\cdot)$ and $R(\cdot)$ are given as \eqref{DR}, $\vartheta = \pi/3$, $c=2$, and $d=2/5$. 

In our tests, we defined $z_0^B=[\cdot,1/2]^T$ and considered different values for the first coordinate $[z_0^B]_1$. 
As for parameter $\beta$ in section~\ref{sec:numericalACondG-1}, $[z_0^B]_1$ can be used to determine whether or not there are points at the intersection of $A$ and $B$.
Table~\ref{tab:ACondG-2} shows the performance of ACondG-2 and ExactAlg\ref{Alg:ACondG-2} algorithms on eight instances of the problem corresponding to the values of $[z_0^B]_1$ given in the first column. The remaining columns are as defined for Table~\ref{tab:ACondG-1}. In each instance, the initial points were taken to be the centers of the ellipses, i.e., $x^0=z_0^A$ and $y^0=z_0^B$.
Figure~\ref{fig:ACondG-2} shows the behavior of the methods in the particular cases where $[z_0^B]_1=2.30$ and $[z_0^B]_1=2.50$.

\begin{table}[h!]
 {\footnotesize
\centering
\begin{tabular}{cc|ccc|ccc|}  \hhline{~~|*6{-}|}
\multicolumn{2}{  c|  }{}     &        \multicolumn{3}{c|}{\cellcolor[gray]{.9} ACondG-2} &  \multicolumn{3}{c|}{\cellcolor[gray]{.9} ExactAlg\ref{Alg:ACondG-2}} \\    \hline \rowcolor[gray]{.9}
 \multicolumn{1}{ |c  }{$[z_0^B]_1$} &  $A\cap B$  &  SC & it & $\min\{c_B(x^*),c_A(y^*)\}$ & SI & it & $\min\{c_B(x^*),c_A(y^*)\}$\\   \hline
 \multicolumn{1}{ |c  }{2.30 }   &                                        & C & 2   & 0.00D+00 & L & 40   & 2.71D-08 \\
 \multicolumn{1}{ |c  }{2.35 }   &                                        & C & 2   & 0.00D+00 & L & 127  & 4.60D-08 \\
 \multicolumn{1}{ |c  }{2.357}   &                                        & C & 8   & 0.00D+00 & L & 398  & 7.63D-08 \\
 \multicolumn{1}{ |c  }{2.358}   & \multirowcell{-4}{$\neq \varnothing$}  & C & 155 & 0.00D+00 & L & 699  & 1.06D-07 \\ \hline
 \multicolumn{1}{ |c  }{2.359}   &                                        & L & 724 & 1.50D-04 & L & 8378  & 7.31D-05 \\
 \multicolumn{1}{ |c  }{2.36 }   &                                        & L & 304 & 1.01D-03 & L & 1091  & 1.00D-03 \\ 
 \multicolumn{1}{ |c  }{2.40 }   &                                        & L & 23  & 4.01D-02 & L & 57  & 4.01D-02 \\
 \multicolumn{1}{ |c  }{2.50 }   &  \multirowcell{-4}{$= \varnothing$}    & L & 15  & 1.59D-01 & L & 25  & 1.59D-01 \\ \hline
\end{tabular}
\caption{Performance of ACondG-2 and ExactAlg\ref{Alg:ACondG-2} algorithms on eight instances of the problem of finding a point in $A\cap B$, where the sets are given as $\eqref{setA-1}$ and $\eqref{setB-2}$.}
\label{tab:ACondG-2}}
\end{table}

\begin{figure}[h!]
\begin{minipage}[b]{0.49\linewidth}
\begin{center}
(a) ACondG-2 ($[z_0^B]_1=2.30$)
\end{center}\vspace{-0.80cm}
\begin{figure}[H]
	\centering
		
		\includegraphics[scale=0.50]{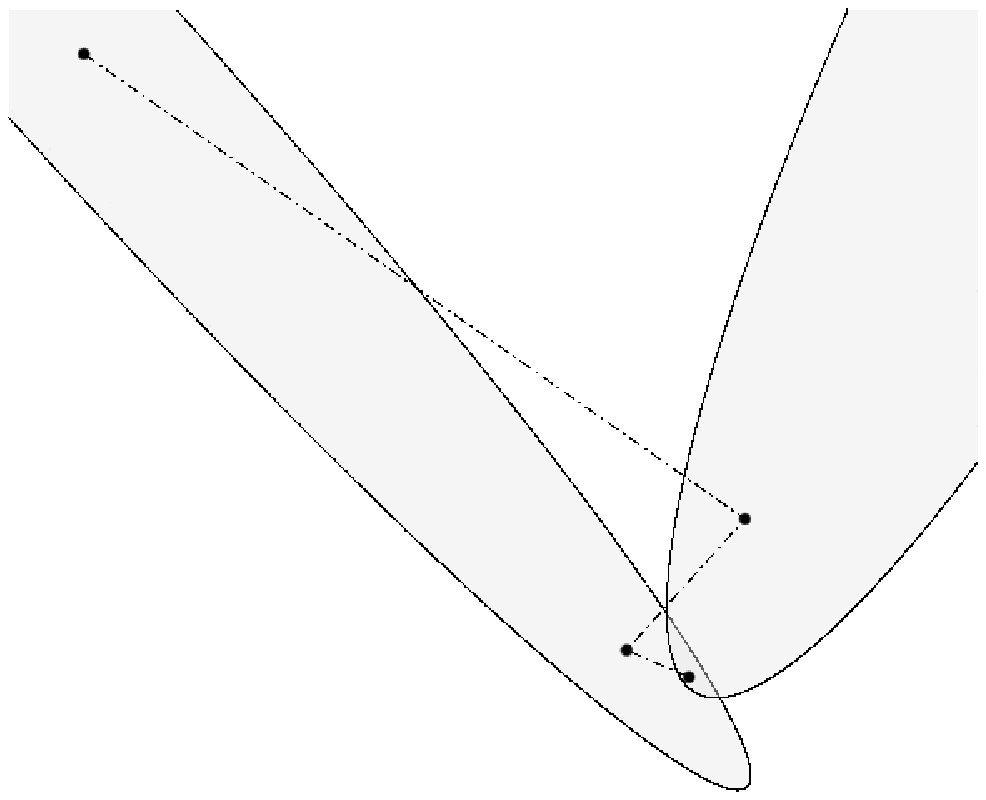}\\
		%(a) ACondG-1 ($\beta=1.30$) 
\end{figure}

\end{minipage} \hfill
\begin{minipage}[b]{0.49\linewidth}
\begin{center}
(b) ExactAlg\ref{Alg:ACondG-2} ($[z_0^B]_1=2.30$)
\end{center}\vspace{-0.80cm}
\begin{figure}[H]
	\centering
		\includegraphics[scale=0.50]{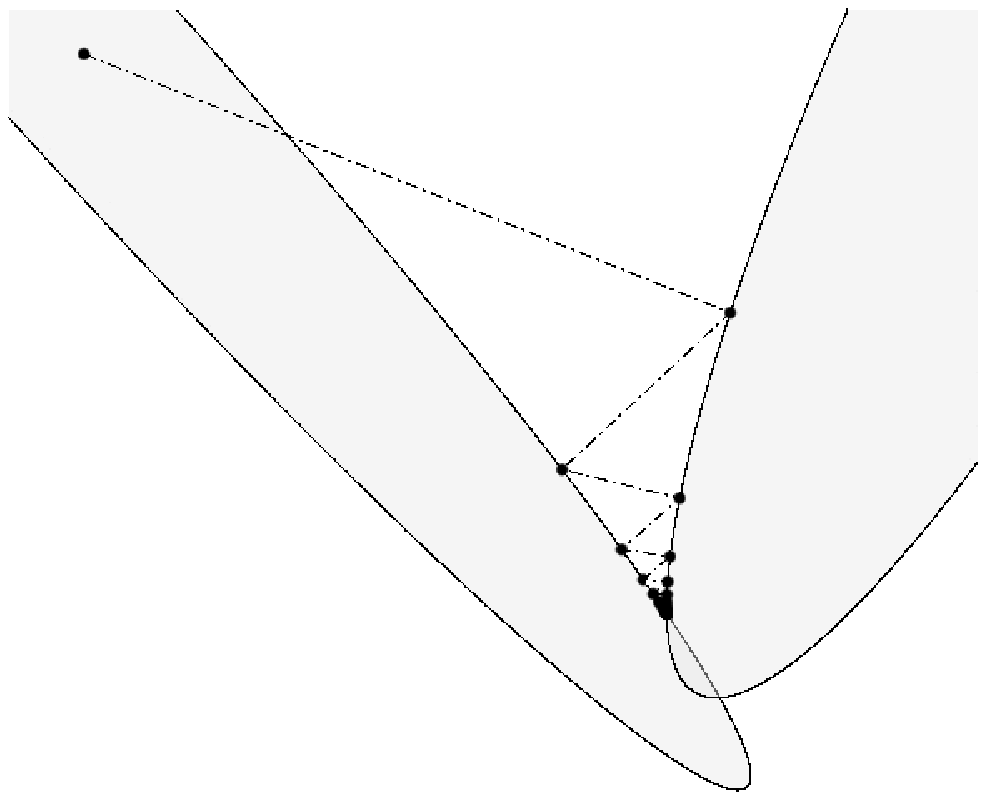}\\
		%(b) ExactAlg\ref{Alg:ACondG-1}
\end{figure}
\end{minipage}\hfill

\vspace{0.3cm}

\begin{minipage}[b]{0.49\linewidth}
\begin{center}
(c) ACondG-2 ($[z_0^B]_1=2.50$)
\end{center}\vspace{-0.80cm}
\begin{figure}[H]
	\centering
		\includegraphics[scale=0.50]{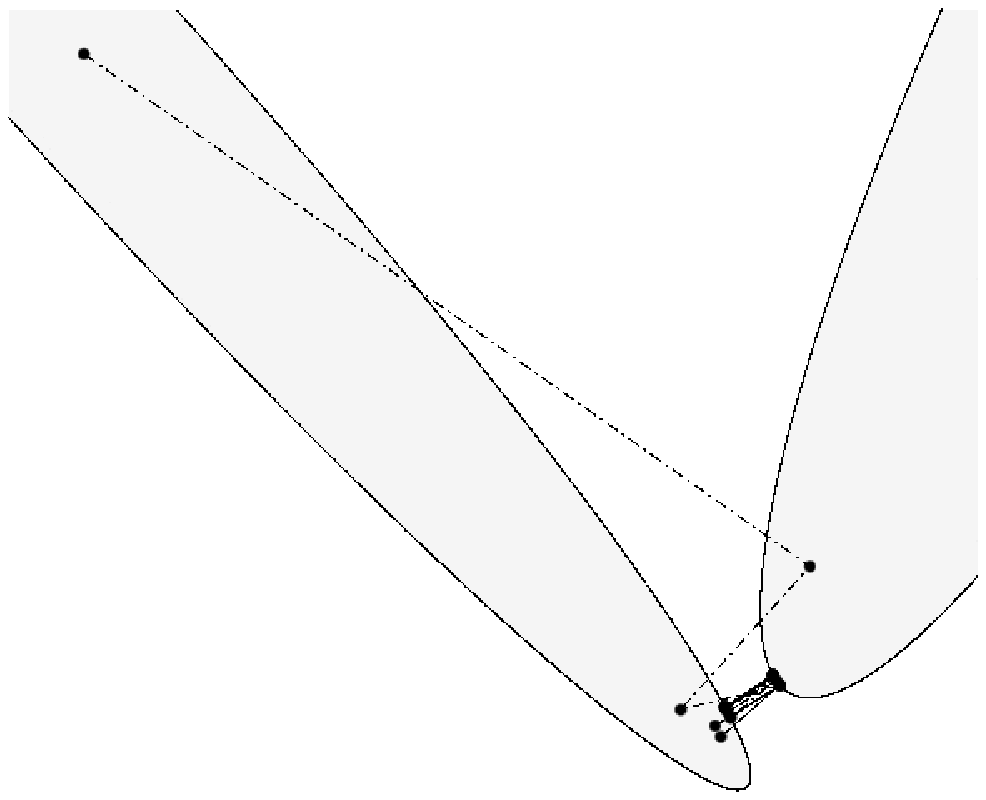}\\
		%(c)
\end{figure}

\end{minipage} \hfill
\begin{minipage}[b]{0.49\linewidth}
\begin{center}
(d) ExactAlg\ref{Alg:ACondG-2} ($[z_0^B]_1=2.50$)
\end{center}\vspace{-0.80cm}
\begin{figure}[H]
	\centering
		\includegraphics[scale=0.50]{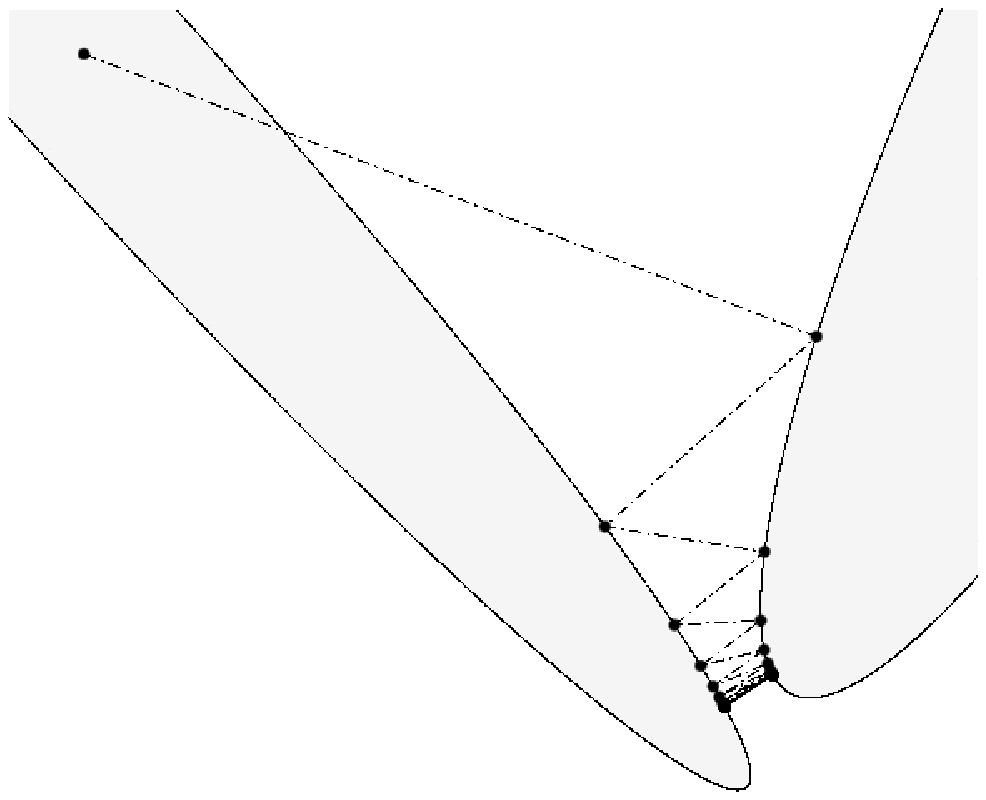}\\
		%(d)
\end{figure}
\end{minipage}\hfill
\caption{Behavior of ACondG-2 and ExactAlg\ref{Alg:ACondG-2} algorithms on the problem of finding a point in $A\cap B$, where the sets are given as $\eqref{setA-1}$ and $\eqref{setB-2}$ with $[z_0^B]_1=2.30$ and $[z_0^B]_1=2.50$.}
\label{fig:ACondG-2}
\end{figure}

In the four feasible instances, ACondG-2 found a point in the intersection of $A$ and $B$ while ExactAlg\ref{Alg:ACondG-2} stopped due to lack of progress.
As for the exact scheme in the previous section, ExactAlg\ref{Alg:ACondG-2} got stuck at infeasible points {\it near} the intersection of the sets, see Figure~\ref{fig:ACondG-2}(b).
We report that, with respect to ExactAlg\ref{Alg:ACondG-2}, the feasibility violation measure $\min\{c_B(x^{k+1}),c_A(y^{k+1})\}$ arrived $\mathcal{O}(10^{-8})$ after  35, 118, and 386 iterations for the first three instances, respectively, and $\mathcal{O}(10^{-7})$ after 526 iterations for the fourth instance. In its turn, as can be seen from Table~\ref{tab:ACondG-2}, ACondG-2 found a feasible point in 2, 2, 8, and 155 iterations, respectively, showing the {\it huge} performance difference between the methods for this class of problem. As suggested by Figure~\ref{fig:ACondG-2}(a), since the iterates lie in the interior of the two sets, ACondG-2 may be able to find a feasible point very quickly.

For the infeasible instances, the feasibility violation measure arrived, respectively, $\mathcal{O}(10^{-4})$, $\mathcal{O}(10^{-3})$, $\mathcal{O}(10^{-2})$, and $\mathcal{O}(10^{-1})$ after 33, 5, 3, and 2 iterations for ACondG-2 and after 82, 15, 5, and 2 for ExactAlg\ref{Alg:ACondG-2}. This shows that, for the chosen set of problems, ACondG-2 approaches the nearest region between $A$ and $B$ faster than ExactAlg\ref{Alg:ACondG-2}, see Figures~\ref{fig:ACondG-2}(c) and (d). On average, ACondG-2 required $65.7\%$ fewer iterations than ExactAlg\ref{Alg:ACondG-2} for stopping due to lack of progress. On the other hand, in some instances, ExactAlg\ref{Alg:ACondG-2} obtained a final iterate with a smaller feasibility violation measure. This can be explained by the fact that algorithms use different numerical approaches to calculate projections. 

Last but not least, the performance of the methods presented throughout the numerical results section should be taken as an illustration of the capabilities of the introduced methods with respect to their exact counterparts, taking into account that they correspond to small problems with specific structures. More precise  conclusions should be made after numerical experiments using problems of different classes and scales.

%%%%%%%%%%%%%%%%%%%%%%%%%%%%
\section{Conclusions} \label{Sec:Conclusions}
In the present  paper,  we proposed a new method to solve Problem~\eqref{def:InexactProjProb} by combining CondG method  with the  alternate directions method.  As suggested by the numerical experiments, the proposed method seems promising. Let us highlight some aspects observed during the numerical  tests.   In the chosen set of test problems,  the inexact methods performed  fewer iterations than the exact ones. In particular,  whenever the intersection of the involved sets  has a nonempty interior, the methods  converged  in a finite  number of iterations. These phenomena deserve further investigations.  It would also be interesting to extend  ACondG method to the convex feasibility  problem with multiple  involved sets. Finally, the  CondG method   can also  be  used to design inexact versions of several projection methods, including  but not limited to  averaged projections method~\cite{LewisLukeMalick2009}, Han's method~\cite{Han1998} (see also \cite{BauschkeBorwein1994})  or more generally Dykstra's alternating projection method~\cite{BauschkeBorwein1994}. For more variants of projections methods see \cite[Section~III]{Combettes1993}.   For instance, one  inexact version  of averaged projection method for two sets  is stated as follows:
\begin{algorithm} 
\begin{description}
\item[ Step 0.] Let   $(\lambda_k)_{k\in\mathbb{N}}$, $(\gamma_k)_{k\in\mathbb{N}}$,  and $(\theta_k)_{k\in\mathbb{N}}$  be  sequences of nonnegative real numbers and  $\varphi_k:=\varphi_{ \gamma_k, \theta_k, \lambda_k}$, as defined in \eqref{eq:pfphi}.   Let $x_0\in A$, $y_0\in B$,  and set $z_0:=(x_0+y_0)/2$. Initialize $k\leftarrow 0$.
\item[ Step 1.] If   $z^{k}\in  A\cap B$, then {\bf stop}.
\item[ Step 2.] Using   Agorithm \ref{Alg:CondG},  compute   $\mbox{CondG$_{A}$}\,(\varphi_k, x^k,z^{k})$ and $\mbox{CondG$_{B}$}\,(\varphi_k, y^k,  z^k)$ and set the next iterate  $z^{k+1}$ as  
\begin{equation*} 
 z^{k+1}:=\frac{1}{2}\left[ \mbox{CondG$_{A}$}\,(\varphi_k, x^k,z^{k}) + \mbox{CondG$_{B}$}\,(\varphi_k, y^k,  z^k)   \right].  
\end{equation*}  
\item[ Step 3.]  Set $k\gets k+1$, and go to \textbf{Step~1}.
\end{description}
\caption{Averaged projection method with  inexact  projections onto  two sets}
\end{algorithm}

%\bibliographystyle{habbrv}
%\bibliography{AlternConditinalGradMeth}

\end{document}